\title[Deriving Deligne--Mumford Stacks]{Deriving Deligne--Mumford Stacks with\\Perfect Obstruction 
  Theories}
\author[T Sch\"urg]{Timo Sch\"urg}
\address{Mathematisches Institut \\Universit\"at Bonn \\Endenicher Allee 60\\53115 Bonn\\Germany}
\email{timo\_ schuerg@operamail.com}
\urladdr{}
\DeclareMathOperator{\Shv}{Shv}
\DeclareMathOperator{\Sp}{Sp}
\DeclareMathOperator{\cofib}{cofib}
\DeclareMathOperator{\fib}{fib}
\DeclareMathOperator{\CAlg}{CAlg}
\DeclareMathOperator{\Sch}{Sch}
\DeclareMathOperator{\Spec}{Spec}
\DeclareMathOperator{\Ext}{Ext}
\DeclareMathOperator{\Tor}{Tor}
\DeclareMathOperator{\et}{\acute{e}t}
\DeclareMathOperator{\id}{id}
\DeclareMathOperator{\QCoh}{QCoh}
\DeclareMathOperator{\sm}{sm}
\DeclareMathOperator{\Der}{Der}
\DeclareMathOperator{\End}{End}
\DeclareMathOperator{\con}{con}
\DeclareMathOperator{\Mod}{Mod}
\DeclareMathOperator{\Fun}{Fun}
\newcommand{\Cat}[1]{\mathcal{#1}}
\newcommand{\Sh}[1]{\mathcal{#1}}
\newcommand{\dS}[1]{\mathfrak{#1}}
\newcommand{\iT}[1]{(\mathcal{#1},\mathcal{O}_{\mathcal{#1}})}
\newcommand{\Oof}[1]{\mathcal{O}_{\mathcal{#1}}}
\newcommand{\trunc}[2]{\tau _{\leq #1} #2}
\renewcommand{\theta}{\vartheta}
\newcommand{\IE}{\mathbb{E}}
\newcommand{\IQ}{\mathbb{Q}}
\newtheorem{thm}{Theorem}[section]
\newtheorem{lem}{Lemma}[section]
\newtheorem{prop}{Proposition}[section]
\newtheorem{cor}{Corollary}[section]
\theoremstyle{definition}
\newtheorem{defn}{Definition}[section]
\newtheorem{exmp}{Example}[section]
\newtheorem{ass}{Assumption}[section]
\theoremstyle{remark}
\newtheorem{rem}{Remark}[section]
  \let\c@lem=\c@thm
  \let\c@prop=\c@thm
  \let\c@cor=\c@thm
  \let\c@defn=\c@thm
  \let\c@exmp=\c@thm
  \let\c@ass=\c@thm
  \let\c@rem=\c@thm
\begin{document}
\begin{abstract}
  We give conditions for a $n$--connective quasi-coherent obstruction 
  theory on a Deligne--Mumford stack to come from the structure of a 
  connective spectral Deligne--Mumford stack on the underlying topos.  
  Working over a base ring containing the rationals, we obtain the 
  corresponding result for derived Deligne--Mumford stacks.
\end{abstract}
\begin{asciiabstract}
  We give conditions for a n-connective quasi-coherent obstruction 
  theory on a Deligne-Mumford stack to come from the structure of a 
  connective spectral Deligne-Mumford stack on the underlying topos.  
  Working over a base ring containing the rationals, we obtain the 
  corresponding result for derived Deligne-Mumford stacks.
\end{asciiabstract}
\maketitle
%%%%%%%%%%
\section{Introduction}
%%%%%%%%%%
Some moduli spaces playing an important role in enumerative geometry 
carry an additional structure. Apart from the cotangent complex, which 
controls deformations and obstructions of the objects parametrized by 
the moduli space, there sometimes exist another complex doing the same 
job.  If the moduli space in question is very singular, the cotangent 
complex will have cohomology in arbitrary many degrees. In many cases, 
the replacement complex has much better finiteness properties, being 
locally isomorphic to a finite complex of vector bundles. If the 
replacement complex is perfect and of Tor--amplitude $\leq 1$ it gives 
rise to the virtual fundamental class of Li--Tian \cite{li98} and 
Behrend--Fantechi \cite{behrend97}, which is the key to actually 
producing numbers.

Ever since this phenomenon was observed by Kontsevich 
\cite{kontsevich95}, it was suspected that the replacement complex is a 
shadow of a derived structure on the moduli space. In the meantime, the 
foundations of derived algebraic geometry have been firmly laid out by 
To\"en--Vezzosi  \cite{toen08} and Lurie \cite{lurie04}. Using these 
theories, in many examples derived moduli spaces having the `correct' 
cotangent complex have been found (for examples see To\"en's overview 
\cite{toen09}). These derived enhancements have the same underlying 
topological space as their classical counterpart, the derived structure 
just being a nilpotent thickening of the structure sheaf. In 
\cite[Section  4.4.3]{toen09}, To\"en observed that such a derived 
enhancement automatically induces a replacement complex for the 
cotangent complex of the classical moduli space. The replacement is 
simply the cotangent complex of the derived enhancement, which might 
well be very different from the cotangent complex of the classical part 
and enjoy much better finiteness properties.

Using the approach of To\"en mentioned above, we can regard the 
replacement complex as the cotangent complex of some possible derived 
enhancement. We thus already know quite a lot about a possible derived 
enhancement inducing the replacement complex: we know its underlying 
space and cotangent complex. The problem then is to lift this 
information on the tangent level to an actual derived structure sheaf on 
the space.  

This formulation makes the problem tractable to using obstruction 
calculus to find a possible derived structure on the moduli space in 
question inducing the replacement complex. In a certain sense, this 
defeats the purpose of derived algebraic geometry, as part of the 
motivation for derived algebraic geometry was precisely avoiding such 
calculations and simply writing down the functor the lifted moduli 
problem should represent\footnote{see the overviews by To\"en--Vezzosi 
  \cite[Section 6]{toen03} and Lurie \cite[page 9]{Elliptic}}. 

Behrend and Fantechi in \cite{behrend97} axiomatized the phenomenon of a 
replacement complex for the cotangent complex of a moduli space to the 
notion of an \emph{obstruction theory}. The main theorem of this paper 
is an obstruction calculus that gives necessary and sufficient 
conditions for an obstruction theory of arbitrary length on a 
Deligne--Mumford stack to come from the structure of a spectral 
Deligne--Mumford stack on the same underlying topos 
(\fullref{thm:geomresult}). Over a base ring containing the rationals 
the theory of spectral Deligne--Mumford stacks and derived 
Deligne--Mumford stacks coincide. Adding this extra assumption, we 
obtain the corresponding calculus for derived Deligne--Mumford stacks.

Recall that an obstruction theory is called \emph{perfect} if the 
replacement complex is in fact a perfect complex. If we assume the given 
obstruction theory to be perfect we can give a more precise description 
of the obstruction classes that measure if a perfect obstruction theory 
is induced by a derived structure.
%
%it follows that any spectral Deligne--Mumford stack inducing the perfect 
%obstruction theory is locally of finite presentation 
%(\fullref{lem:lfp}). Again adding the assumption that our base ring 
%contains the rationals, the main theorem thus implies that any perfect 
%obstruction theory of any length on a Deligne--Mumford stack comes from 
%a derived Deligne--Mumford stack locally of finite presentation with the 
%same underlying topos. In particular, every 1--perfect obstruction 
%theory comes from a structure of quasi-smooth Deligne--Mumford stack 
%(\fullref{cor:qsmooth}) on the same underlying topos. The structure 
%inducing the obstruction theory is by no means expected to be unique.

The method of proof uses obstruction calculus for nilpotent thickenings 
of derived rings. The basic observation underlying the whole work is 
easily described.  An obstruction theory for a commutative ring $A$ is 
given by a morphism $\phi \colon E \to L_A$, where $E$ is a complex of 
$A$--modules and $L_A$ is the cotangent complex. Such a morphism can 
always be completed to a cofiber sequence
\[
  E \overset{\phi}{\longrightarrow} L_A \overset{\eta}{\longrightarrow} 
  K.
\]
Now the datum of a morphism $\eta \colon L_A \to K$ defines a 
square-zero extension $A^{\eta} \to A$. The cotangent complex of 
$A^{\eta}$ is already an excellent approximation of $E$. There exists a 
comparison map $E \to L_{A^{\eta}}$ which is an equivalence in low 
degrees. The remaining work then is to find further square-zero 
extensions of $A^{\eta}$ that successively correct the difference in 
higher degrees. This is only possible if the obstruction theory lifts to 
the nilpotent thickening $A^{\eta}$. If this is possible, this process 
will allow us to lift the structure sheaf of the classical part step by 
step to a structure sheaf of derived rings which has the right cotangent 
complex.  The main advantage of this approach is that it is global from 
the start, thus avoiding all gluing issues. The practical value of such 
a result is small though. Given a moduli problem equipped with a 
obstruction theory it is far better to find the appropriate derived 
formulation of the moduli problem, as the true derived moduli space 
contains much more information than just the induced perfect obstruction 
theory on the truncation.

\subsection*{Conventions}
\begin{itemize}
  \item Given a stable $\infty$--category $\Cat{C}$ equipped with a 
    $t$--structure in the sense of Lurie's treatise on Higher Algebra 
    $\cite{HigherAlgebra}$, an object $X$ of $\Cat{C}$ is said to be 
    \emph{$n$--connective} if $X \in \Cat{C}_{\geq n}$. A morphism $f 
    \colon X \to Y$ is $n$--connective it its fiber $\fib(f)$ is 
    $n$--connective.
  \item Given a commutative ring $A$ we will denote by $\Mod_A$ the 
    $\infty$--category of $A$--module spectra. This category contains the 
    category of ordinary $A$--modules as heart of a $t$--structure.  
    Roughly, objects of $\Mod_A$ consist of possibly unbounded chain 
    complexes of ordinary $A$--modules.
\end{itemize}

\subsection*{Acknowledgments}
I would like to thank Manfred Lehn, Marc Nieper-Wi\ss kirchen and 
Gabriele Vezzosi for countless helpful discussions on the subject,
Moritz Groth, Parker Lowrey and David Carchedi for many explanations 
about $\infty$--categories, and the referee for his/her suggestions.  
Finally I thank Barbara Fantechi for emphasizing the benefits of global 
constructions.  The idea to develop an obstruction calculus for the 
existence of a derived structure globally step by step is by her. The 
dependence on Lurie's volumes is obvious from the number of citations.

The published version of this paper contains as main result the 
erroneous claim that every obstruction theory is induced by a derived 
structure. I would like to thank Richard Thomas and the anonymous 
referee for pointing out the mistake and giving some hints on what 
assumptions had to be added.

This work was supported by the SFB/TR 45 `Periods, Moduli Spaces and 
Arithmetic of Algebraic Varieties' of the DFG (German Research 
Foundation).

%%%%%%%%%%

%%%%%%%%%%
\section{The Algebraic Case}
%%%%%%%%%%

In this section we first treat the problem of when a derived structure 
induces an obstruction theory in an abstract setting. The abstract 
setting will be given by a stable symmetric monoidal $\infty$--category 
$\Cat{C}$ equipped with a $t$--structure satisfying some assumptions 
(\fullref{ass:t}). After reviewing some results on the cotangent complex 
of a commutative algebra object in such a category, we then define the 
notion of an $n$--connective obstruction theory on a commutative algebra 
object $A \in \CAlg(\Cat{C})$ (\fullref{defn:pot}). We then define how a 
morphism $f \colon B \to A$ can induce a given obstruction theory. As 
main result we prove that for any given commutative algebra object $A$ 
with fixed obstruction theory there always exists a morphism $f \colon B 
\to A$ inducing the obstruction theory. 

The main example for $\Cat{C}$ we have in mind is the $\infty$--category 
of $k$--module spectra where $k$ is a commutative ring containing $\IQ$ 
(\fullref{exmp:overk}). Concrete models for connective commutative 
algebra objects in this category are given by simplicial $k$--algebras 
(\fullref{exmp:simplicial}) or by connective commutative differential 
graded algebras over $k$ (\fullref{exmp:cdga}). In this example we also 
show a finiteness result for any commutative algebra object $B$ inducing 
an obstruction theory on a finitely presented discrete commutative 
$k$--algebra in case the $n$--connective obstruction theory is perfect.

\subsection{Background}
Throughout this paper, the following assumption will be made with regard 
to the $\infty$--category $\Cat{C}$ in question.

\begin{ass}
  \label{ass:t}
  Let $\Cat{C}$ be a symmetric monoidal stable $\infty$--category 
  equipped with a $t$--structure satisfying the following assumptions 
  \cite[Construction  8.4.3.9]{HigherAlgebra}:
  \begin{enumerate}[(i)]
    \item The $\infty$--category $\Cat{C}$ is presentable.
    \item The tensor product $\otimes \colon \Cat{C} \times \Cat{C} \to 
      \Cat{C}$ preserves small colimits separately in each variable.
    \item The full subcategory $\Cat{C}_{\geq 0} \subseteq \Cat{C}$ 
      contains the unit object and is closed under tensor products.
  \end{enumerate}
\end{ass}

Denote by $\CAlg (\Cat{C})$ the category of commutative algebra objects 
in $\Cat{C}$. We will make constant use of several fundamental facts 
proven in \cite{HigherAlgebra}. The first concerns the existence of a 
cotangent complex in such a situation. Lurie proves that in this 
generality for every commutative algebra object $A \in \CAlg(\Cat{C)}$ 
there exists a cotangent complex $L_A$ which is an $A$--module \cite[Theorem  
8.3.4.18]{HigherAlgebra}. Note that in the case where $A$ is an 
$\IE_{\infty}$--ring, the homotopy groups of $L_A$ are the topological 
Andr\'e--Quillen homology groups of $A$, and in characteristic different 
from zero these do not have to coincide with classical Andr\'e--Quillen 
homology groups. The second concerns the question what the cotangent 
complex classifies.  It turns out that maps from the cotangent complex 
$L_A$ of an object $A \in \CAlg (\Cat{C})$ to an $A$--module $M[1]$ 
correspond to square-zero extensions of $A$ with fiber $M$. To make a 
precise statement we recall the following definitions from \cite[Section  
8.4]{HigherAlgebra}.

The $\infty$--category of \emph{derivations} in $\CAlg(\Cat{C})$ consists 
of pairs $(A, \eta \colon L_A \to M[1])$ where $L_A$ is the cotangent 
complex of $A$ and $M$ is an $A$--module. This category will be denoted 
by $\Der(\CAlg(\Cat{C}))$. We can now impose connectivity assumptions on 
$A$ and the module $M$. Let $\Der_{n-\con}(\CAlg(\Cat{C}))$ be the full 
subcategory of \emph{$n$--connective derivations}, defined by the 
conditions that $A \in \Cat{C}_{\geq 0}$ and $M \in \Cat{C}_{\geq n}$.  
Imposing even stricter conditions, let $\Der_{n-\sm}(\CAlg(\Cat{C}))$ be 
the full subcategory of \emph{$n$--small derivations} of 
$\Der_{n-\con}(\CAlg(\Cat{C}))$ spanned by those objects such that $M 
\in \Cat{C}_{\leq 2n}$.

To each derivation we can associate a \emph{square-zero extension}. This 
associates to a derivation
\[
  (A, \eta \colon L_A \to M[1])
\]
a morphism
\[
  A^{\eta} \longrightarrow A
\]
of objects in $\CAlg(\Cat{C})$. The fiber of $A^{\eta} \to A$ can be 
identified as an $A^{\eta}$--module with $M$. More generally, we say that 
a morphism $\widetilde{A} \to A$ is a square-zero extension if there 
exists a derivation $(A, L_A \to M[1])$ and an equivalence 
$\widetilde{A} \simeq A^{\eta}$. As above, we can impose connectivity 
assumptions on square-zero extensions. A morphism $f \colon A \to B$ in 
$\CAlg(\Cat{C})$ is an $n$--connective extension if $A \in \Cat{C}_{\geq 
  0}$ and $\fib (f) \in \Cat{C}_{\geq n}$. Again imposing further 
connectivity assumptions, we call an extension $n$--small if $\fib (f) 
\in \Cat{C}_{\leq 2n}$ and the multiplication map $\fib(f) \otimes_A 
\fib(f) \to \fib(f)$ is nullhomotopic. Denote by $\Fun _{n-\con} 
(\Delta^1, \CAlg(\Cat{C}))$ the full subcategory of the category of 
morphisms $\Fun (\Delta^1, \CAlg{C})$ spanned by the $n$--connective 
extensions, and by $\Fun _{n-\sm} (\Delta^1, \CAlg(\Cat{C}))$ the full 
subcategory spanned by the $n$--small extensions.

The process described above in fact defines a functor of 
$\infty$--categories
\[
  \Phi \colon \Der (\CAlg (\Cat{C})) \longrightarrow \Fun (\Delta^1, 
  \CAlg(\Cat{C}))
\]
given on objects by
\[
  (A, \eta \colon L_A \to M[1]) \longmapsto (A^{\eta} \to A).
\]
This functor has a left adjoint
\[
  \Psi \colon  \Fun (\Delta^1, \CAlg(\Cat{C})) \longrightarrow  \Der 
  (\CAlg (\Cat{C}))
\]
given on objects by
\[
  (\widetilde{A} \to A) \longmapsto (A, d \colon L_A \to 
  L_{A/\widetilde{A}}).
\]

Lurie proves that this adjunction restricts to subcategories with the 
appropriate connectivity assumptions and gives an equivalence of 
categories.

\begin{thm}\cite[Theorem 8.4.1.26]{HigherAlgebra}
  \label{thm:extension}
  Let $\Cat{C}$ be as above. Then
  \[
    \Phi_{n-\sm} \colon \Der_{n-\sm} \longrightarrow \Fun 
    _{n-\sm}(\Delta ^1, \CAlg(\Cat{C}))
  \]
 is an equivalence of $\infty$--categories.
\end{thm}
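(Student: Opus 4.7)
The plan is to leverage the ambient adjunction $\Psi \dashv \Phi$ and reduce the theorem to two assertions: that each functor preserves the $n$-small conditions, and that the unit and counit become equivalences when restricted to the $n$-small subcategories.

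Preservation of $n$-smallness is essentially bookkeeping. Given an $n$-small derivation $(A, \eta \colon L_A \to M[1])$, the extension $A^\eta \to A$ has fiber $M \in \Cat{C}_{\geq n} \cap \Cat{C}_{\leq 2n}$, and the multiplication $M \otimes_{A^\eta} M \to M$ is nullhomotopic tautologically, $A^\eta$ being a square-zero extension with square-zero ideal $M$. Conversely, if $f \colon \widetilde{A} \to A$ is $n$-small with fiber $M$, the transitivity sequence $L_{\widetilde{A}} \otimes_{\widetilde{A}} A \to L_A \to L_{A/\widetilde{A}}$ together with standard connectivity estimates for relative cotangent complexes places $L_{A/\widetilde{A}}$ in $\Cat{C}_{\geq n+1} \cap \Cat{C}_{\leq 2n+1}$, so the induced derivation is $n$-small.

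For the counit $\Psi \Phi \to \id$ at an $n$-small derivation $(A, \eta)$, one has $\Psi\Phi(A,\eta) = (A, d \colon L_A \to L_{A/A^\eta})$, and one must produce a compatible equivalence $M[1] \simeq L_{A/A^\eta}$ identifying $\eta$ with $d$. This is a standard property of square-zero extensions: because $M$ acts trivially on itself inside $A^\eta$, the base change $M \otimes_{A^\eta} A$ reduces to $M$, and the usual cotangent-complex computation for a square-zero extension yields $L_{A/A^\eta} \simeq M[1]$ under the canonical identification that sends $\eta$ to $d$.

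The crux is to prove the unit $\id \to \Phi \Psi$ is an equivalence on $n$-small extensions; equivalently, that every $n$-small extension is canonically a square-zero extension. Given $f \colon \widetilde{A} \to A$ with fiber $M$, the unit provides a canonical factorization $\widetilde{A} \to A^d \to A$, and showing the first map is an equivalence reduces, after passing to fibers over $A$, to showing that the natural comparison map $M \to L_{A/\widetilde{A}}[-1]$ is an equivalence. The main obstacle, and the technical heart of the proof, is identifying $L_{A/\widetilde{A}}[-1]$ with $M$ under the $n$-small hypotheses. The strategy is to compute $L_{A/\widetilde{A}}$ via a bar-type resolution or, equivalently, via the simplicial filtration associated to the augmentation ideal: the leading term is precisely $M[1]$, while higher terms involve iterated tensor products $M \otimes_{\widetilde{A}} M \otimes_{\widetilde{A}} \cdots$ of length at least two. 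By $n$-connectivity these tensor powers already lie in $\Cat{C}_{\geq 2n}$, and the nullhomotopy of $M \otimes_{\widetilde{A}} M \to M$ forces their contribution to $L_{A/\widetilde{A}}$ to vanish in the range $\Cat{C}_{\leq 2n+1}$. Since $L_{A/\widetilde{A}}$ lives entirely in that range, only the leading term survives, and the comparison map is an equivalence.
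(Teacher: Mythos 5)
This theorem is not proved in the paper at all: it is quoted from Lurie (\cite[Theorem 8.4.1.26]{HigherAlgebra}), so there is no in-paper argument to compare yours against; I will instead measure your sketch against what the theorem actually says and how the paper later uses it. Your overall skeleton (restrict the ambient adjunction $\Psi \dashv \Phi$, check that the $n$--small conditions are preserved, check unit and counit) is reasonable, but two of your key identifications are false, and they sit exactly where the content of the theorem lies. You claim that for a square-zero extension $A^{\eta} \to A$ the ``usual computation'' gives $L_{A/A^{\eta}} \simeq M[1]$, and that for an $n$--small extension $\widetilde{A} \to A$ the relative cotangent complex lies in $\Cat{C}_{\geq n+1} \cap \Cat{C}_{\leq 2n+1}$, so that $\Psi$ preserves $n$--smallness. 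Neither holds: already for the $0$--small extension $k[\epsilon] \to k$ (the trivial square-zero extension of $k$ by $k$) the relative cotangent complex $L_{k/k[\epsilon]}$ has nonvanishing homotopy in infinitely many degrees, so it is neither equivalent to $M[1]$ nor $(2n+1)$--truncated; the same failure persists for every $n$ (e.g.\ for $k \oplus k[n] \to k$).

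The correct statement is only that the canonical map $\delta_f \colon M[1] \to L_{A/A^{\eta}}$ has $(2n+2)$--connective fiber --- this is precisely \fullref{prop:delta}, i.e.\ the $\epsilon_f$--estimate of \fullref{thm:connectivity} --- and the theorem holds because $M$ is assumed $2n$--truncated, so that $M[1] \simeq \trunc{2n+1}{L_{A/A^{\eta}}}$. Consequently the inverse of $\Phi_{n-\sm}$ is not the restriction of $\Psi$ but its truncated variant $f \mapsto (A, L_A \to \trunc{2n+1}{L_{A/\widetilde{A}}})$; the counit of the ambient adjunction is \emph{not} an equivalence on $n$--small derivations, and $\Psi$ itself does not carry $n$--small extensions to $n$--small derivations. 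Your bar-filtration argument for the unit collapses for the same reason: it rests on the premise that $L_{A/\widetilde{A}}$ lives entirely in $\Cat{C}_{\leq 2n+1}$, and it misuses the nullhomotopy of $M \otimes_{\widetilde{A}} M \to M$ to ``kill'' the higher filtration terms --- those terms are not killed, they are merely highly connected, which needs only $n$--connectivity of $M$; the nullhomotopy and the $2n$--truncation are instead what make the truncated comparison lossless and the extension recoverable from a derivation. Note that the paper's own use of the theorem in the proof of \fullref{prop:delta} identifies only $\trunc{2n+1}{\delta_f}$ with the (co)unit of the restricted adjunction, which directly contradicts your stronger claims. Your outline can be repaired, but only by inserting the truncation into the inverse functor and replacing both of your ``computations'' by the connectivity estimate, which is the actual heart of Lurie's proof.
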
 

The third fundamental fact we will use concerns the connectivity of the 
cotangent complex. In short, the cotangent complex of a highly connected 
morphism is again highly connected. The precise statement is the 
following:

\begin{thm}
  \cite[Theorem 8.4.3.11]{HigherAlgebra}
  \label{thm:connectivity}
  Let $\Cat{C}$ be an $\infty$--category as above. Let $f \colon A \to B$ 
  be a morphism of objects of $\CAlg(\Cat{C})$ such that both $A$ and $B 
  \in \Cat{C}_{\geq 0}$. Assume that $\cofib(f) \in \Cat{C}_{\geq n}$.  
  Then there exists a canonical morphism $\epsilon_f \colon B \otimes _A 
  \cofib (f) \to L_{B/A}$, and furthermore $\fib (\epsilon _f ) \in 
  \Cat{C}_{\geq 2n}$.
\end{thm}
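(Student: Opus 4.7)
I construct $\epsilon_f$ from the universal derivation and then bound $\fib(\epsilon_f)$ by reducing to the case where $B$ is a free commutative $A$-algebra. The universal derivation is an $A$-algebra map $B \to B \oplus L_{B/A}$ splitting the augmentation. Its $L_{B/A}$-component is an $A$-module map $B \to L_{B/A}$ whose restriction along $A \to B$ is canonically null-homotopic, so it factors through $\cofib(f) = \cofib(A \to B)$, producing a morphism $\cofib(f) \to L_{B/A}$ of $A$-modules. The morphism $\epsilon_f$ is then the adjoint under the free/forgetful adjunction between $A$-modules and $B$-modules.

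Next I verify the connectivity estimate in the free case $B = \Sym_A(V)$ with $V \in \Cat{C}_{\geq n}$. Here $L_{B/A} \simeq B \otimes_A V$ and $\cofib(f) \simeq \bigoplus_{k \geq 1} \Sym^k_A(V)$, and a direct inspection of the universal derivation shows that $\epsilon_f$ restricts to the identity on the $k=1$ summand $B \otimes_A V$. Hence $\epsilon_f$ admits a section and $\fib(\epsilon_f) \simeq B \otimes_A \bigoplus_{k \geq 2} \Sym^k_A(V)$. Under Assumption~\ref{ass:t}(iii) the tensor product preserves $\Cat{C}_{\geq 0}$, so $V^{\otimes k} \in \Cat{C}_{\geq kn}$ whenever $V \in \Cat{C}_{\geq n}$; since $\Sym^k(V)$ is a colimit of copies of $V^{\otimes k}$ under the symmetric group action and colimits preserve connectivity, $\Sym^k(V) \in \Cat{C}_{\geq kn} \subseteq \Cat{C}_{\geq 2n}$ for $k \geq 2$, yielding the bound.

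For general $B$, the plan is to resolve $B$ in $\CAlg(\Cat{C})_{A/}$ by a simplicial diagram of free $A$-algebras $B_p = \Sym_A(V_p)$ with $V_p \in \Cat{C}_{\geq n}$, and to use that both functors $B' \mapsto L_{B'/A}$ and $B' \mapsto B' \otimes_A \cofib(A \to B')$ are compatible with geometric realizations after base change back to $B$. The main obstacle is constructing such a resolution respecting the connectivity constraint: the naive monadic bar construction begins with $\Sym_A(B)$, whose generators $B$ are only $0$-connective, so the resulting cofiber reflects the connectivity of $B$ rather than that of $\cofib(f)$. One instead proceeds by cell attachment, factoring $A \to B$ as $A \to \Sym_A(V_0) \to B$ with $V_0 \in \Cat{C}_{\geq n}$ chosen to hit $\pi_n \cofib(f)$ so that $\cofib(\Sym_A(V_0) \to B) \in \Cat{C}_{\geq n+1}$, and iterating. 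Controlling each such cell attachment against the cotangent complex, via the equivalence between small derivations and small extensions of Theorem~\ref{thm:extension}, is the technical heart of the argument.
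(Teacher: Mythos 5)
The paper does not prove this statement at all: it is quoted verbatim from Lurie \cite[Theorem 8.4.3.11]{HigherAlgebra} and used as a black box (the same goes for \fullref{thm:extension}), so the only "proof" in the paper to compare against is the citation. Judged on its own terms, your proposal is incomplete. The construction of $\epsilon_f$ via the universal derivation is correct (the $A$-linearity of the relative derivation and the canonical null-homotopy of its restriction to $A$ do give a factorization through $\cofib(f)$, and extension of scalars gives the $B$-linear map), and the free case is fine: $L_{\Sym_A(V)/A}\simeq \Sym_A(V)\otimes_A V$, the $k=1$ component of $\epsilon_f$ is the identity by construction, and $\Sym^k(V)=(V^{\otimes k})_{h\Sigma_k}$ is $kn$-connective since $\Cat{C}_{\geq kn}$ is closed under colimits and, by \fullref{ass:t}(iii) and shifting, under tensor products of suitably connective objects.

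The gap is exactly where you flag it: the reduction of a general $f\colon A\to B$ to the free case is only announced, not carried out, and this is where all the content of the theorem lives. You correctly observe that the naive bar resolution destroys the connectivity hypothesis on the generators, but the replacement you propose (an iterated cell attachment $A\to \Sym_A(V_0)\to B$ with increasingly connective cofibers) is left entirely schematic. To make it a proof you would need, at minimum: (a) existence of such factorizations with $V_m\in\Cat{C}_{\geq n}$ and $\cofib(B_m\to B)$ of increasing connectivity in this abstract setting (this uses the $t$-structure axioms, not just presentability); (b) a comparison of $\epsilon$ for a composite $A\to B'\to B$ with the $\epsilon$'s of the two pieces via the transitivity cofiber sequence of cotangent complexes, so that the estimates can be assembled; and (c) a passage to the limit of the tower, with control of the connectivity of the error terms at each stage. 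None of these steps is supplied, and your appeal to compatibility with geometric realizations would also need justification (only sifted colimits in the algebra variable are available, and your connectivity constraint is precisely what the realization argument does not respect). Using \fullref{thm:extension} as an ingredient is not circular relative to Lurie's development, but as written the proposal establishes the theorem only for free algebras.
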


In the special case of square-zero extension $f \colon A^{\eta} \to A$ 
with cofiber $M[1]$, the map $\epsilon_f$ allows us to compare $M[1]$ 
with the relative cotangent complex $L_{A/A^{\eta}}$.

\subsection{The Construction}

We begin by giving the definition of an obstruction theory in this 
abstract setting.

\begin{defn}
  \label{defn:pot}
  Let $A \in  \Cat{C}_{\geq 0}$ be a commutative algebra object.  An 
  \emph{$n$--connective obstruction theory} for $A$ is a morphism
  \[
    \phi \colon E \longrightarrow L_{A}
  \]
  of connective $A$--modules such that $\cofib(\phi) \in \Cat{C}_{\geq 
    n+1}$.
  %An obstruction theory is an $m$--obstruction theory if $E \in 
  %\Cat{C}_{\geq 0} \cap \Cat{C}_{\leq m}$.
\end{defn}

\begin{rem}
  Let $A$ be a discrete object of $\CAlg(\Cat{C})$ equipped with a 
  1--connective obstruction theory $\phi \colon E \to L_A$. The 
  condition $\cofib{\phi} \in \Cat{C}_{\geq 2}$ is equivalent to $\pi_0 
  \phi$ being an isomorphism  and $\pi_1 \phi$ being surjective, thus 
  recovering the definition of \cite{behrend97}.
\end{rem}

\begin{rem}
  \label{rem:assder}
  The datum of an $n$--connective obstruction theory for connective $A 
  \in \CAlg(\Cat{C})$ is equivalent to giving an $n$--connective 
  derivation of $A$. To see this, simply complete $\phi \colon E \to 
  L_A$ to a cofiber sequence
  \[
    E \overset{\phi}{\to} L_A \overset{\eta}{\to} K.
  \]
  By definition, $\eta \colon L_A \to K$ is an $n$--connective derivation.
\end{rem}

\begin{defn}
  \label{defn:induce}
  Let $n \geq 1$ and let $(A, \phi \colon E \to L_A)$ be a 
  $n$--connective obstruction theory, and $(A, \eta \colon L_A \to K)$ 
  the associated $n$--connective derivation. We say that a pair
  \[
    \left( f \colon B \to A, \widetilde{\delta} \colon K \to 
      L_{A/B}\right)
  \]
  \emph{induces the obstruction theory} if
  \begin{enumerate}[(i)]
    \item $\trunc{n-1}{f} \colon \trunc{n-1}{B} \to \trunc{n-1}{A}$ is 
      an equivalence.
    \item The diagram
      \[
        \xymatrix{
          L_A \ar[r]^{\phi} \ar[d]_{d} & K \ar[dl]^{\widetilde{\delta}} 
          \\
          L_{A/B} &
        }
      \]
      commutes and $\widetilde{\delta}$ is an equivalence.
  \end{enumerate}
\end{defn}

\begin{rem}
  \label{rem:assphi}
  Let $(A, \phi \colon E \to L_A)$ be an $n$--connective obstruction 
  theory, and assume that $( f \colon B \to A, \widetilde{\delta} \colon 
  K \to L_{A/B} )$ induces the obstruction theory.  This induces an 
  equivalence
  \[
    \widetilde{\phi} \colon E \to A \otimes _B L_{B}.
  \]
\end{rem}

\begin{exmp}
  Let $A \in \CAlg(\Cat{C})$. Then $\pi_0 A$ can be equipped with a 
  canonical 1--connective obstruction theory. The obstruction theory is 
  given by
  \[
    \phi \colon \pi_0 A \otimes _A L_A \longrightarrow L_{\pi_0 A}.
  \]
  It immediately follows from \fullref{thm:connectivity} that the 
  cofiber of $\phi$ is in $\Cat{C}_{\geq 2}$. This obstruction theory is 
  trivially induced by $(A \to \pi_0 A, \id \colon L_{\pi _0 A/A} \to 
  L_{\pi_0 A/A})$. More generally, $A$ induces an $n+1$--connective 
  obstruction theory on all truncations $A \to \trunc {n}{A}$.  
\end{exmp}

Starting from the data of an $n$--connective obstruction theory $(A, \phi 
\colon E \to L_A)$, or equivalently, an $n$--connective derivation $(A, 
\eta \colon L_A \to K)$, we now want to examine when a pair  $(f \colon  
B \to A, \widetilde{\delta} \colon K \to L_{B/A})$ inducing the 
$n$--connective derivation exits. We will aim to construct $B$ as an 
increasingly connective tower of square-zero extensions of $A$ and 
encounter some obstruction on the way. We begin with a simple result on 
the connectivity of square-zero extensions.

\begin{lem}
  \label{lem:conn}
  Let $(A, \eta \colon L_A \to M[1])$ be an $n$--connective derivation.  
  Then the square-zero extension $A^{\eta} \to A$ is $n$--connective.
\end{lem}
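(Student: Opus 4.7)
The plan is to unwind the definitions and read off the connectivity from the fiber sequence associated to the square-zero extension. By the definition of $n$--connective derivation, we have $A \in \Cat{C}_{\geq 0}$ and $M \in \Cat{C}_{\geq n}$, and the square-zero extension $A^{\eta} \to A$ comes equipped with a fiber sequence
\[
  M \longrightarrow A^{\eta} \longrightarrow A
\]
in $\Cat{C}$ (in fact in $\Mod_{A^{\eta}}$, but forgetting this extra structure is harmless). By the recalled fact that $\fib(A^{\eta} \to A) \simeq M$ as an $A^{\eta}$--module, the first step is just to note that the fiber of the extension lies in $\Cat{C}_{\geq n}$.

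Next, one has to verify the remaining half of the definition of an $n$--connective extension, namely that the source $A^{\eta}$ is itself connective. This follows from the long exact sequence on homotopy objects of the above fiber sequence: for $k < 0$, both $\pi_k M$ and $\pi_k A$ vanish (the former because $n \geq 0$, so $M \in \Cat{C}_{\geq n} \subseteq \Cat{C}_{\geq 0}$; the latter by hypothesis on $A$), so $\pi_k A^{\eta} = 0$ for all $k<0$, hence $A^{\eta} \in \Cat{C}_{\geq 0}$. Equivalently, $\Cat{C}_{\geq 0}$ is closed under extensions in the sense of $t$--structures.

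Combining these two observations, the morphism $A^{\eta} \to A$ satisfies both conditions defining an $n$--connective extension. I do not expect any real obstacle here; the lemma is essentially a definitional check, once one accepts the identification of $\fib(A^{\eta} \to A)$ with $M$ provided by the square-zero extension construction recalled just before \fullref{thm:extension}.
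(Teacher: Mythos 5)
Your argument is correct and follows the paper's own proof: identify $\fib(A^{\eta}\to A)$ with $M$ via the fiber sequence $M \to A^{\eta} \to A$ and invoke $M \in \Cat{C}_{\geq n}$. The extra verification that $A^{\eta} \in \Cat{C}_{\geq 0}$ (closure of $\Cat{C}_{\geq 0}$ under extensions) is a harmless addition the paper leaves implicit.
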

\begin{proof}
  We have a fiber sequence of $A^{\eta}$--modules
  \[
    M \to A^{\eta} \to A.
  \]
  Since $M \in \Cat{C}_{\geq n}$ by assumption, the claim follows.
\end{proof}

We now introduce the key technical tool. We have seen that given a 
$n$--connective derivation $(A, \eta \colon L_A \to M[1])$ there exists 
an associated $n$--connective square-zero extension $f \colon A^{\eta} 
\to A$.  We now want to study how the relative cotangent complex 
$L_{A/A^{\eta}}$ compares to the module $M[1]$. In the following we will 
construct a morphism $\delta_f$ that compares the two. This 
$\delta_f$--map is a slight refinement of the map $\epsilon_f$ of 
\fullref{thm:connectivity}, and it can also be directly deduced from 
$\epsilon_f$.

Recall that we have an adjunction $\Phi \leftrightarrows \Psi$ between 
the categories of extensions and derivations. Let $v$ be the co-unit of 
this adjunction. By definition, on a derivation $(A, \eta \colon L_A \to 
M[1])$ with corresponding extension $f \colon A^{\eta} \to A$ the 
co-unit $v$ is given by
\[
  (A, \eta \colon L_A \to M[1]) \longmapsto (A, d \colon L_A \to 
  L_{A/A^{\eta}}).
\]
In particular, we obtain the following diagram in the category $\Mod_A$:
\begin{equation}
  \label{eq:delta}
  \xymatrix{
    L_{A} \ar[r]^{\eta} \ar[dr]_{d} & M[1] \ar[d]^{\delta_f} \\
    & L_{A/A^{\eta}}
  }
\end{equation}

\begin{defn}
  Let $(A, \eta \colon L_A \to M[1])$ be a derivation. Let $\delta_f$ be 
  the morphism defined by the co-unit $v$ of the adjunction $\Phi 
  \leftrightarrows \Psi$ as in Equation \eqref{eq:delta}.
\end{defn}

Given an object $A \in \CAlg (\Cat{C})$ with obstruction theory $\phi 
\colon E \to L_A$ and associated derivation $\eta \colon L_A \to K$ the 
morphism $\delta_f$ fits into the fundamental diagram of cofiber 
sequences:
\begin{equation}
  \label{eq:FundDiag}
  \xymatrix{
    E \ar[r]^{\phi'} \ar[d] & A \otimes _{A^{\eta}} L_{A^{\eta}} 
    \ar[r]^{\eta '} \ar[d] & \fib(\delta_f)  \ar[d] \\
    E \ar[r]^{\phi} \ar[d] & L_{A} \ar[d]^{d} \ar[r]^{\eta} & K 
    \ar[d]^{\delta_f} \\
    0 \ar[r] & L_{A/A^{\eta}} \ar[r] & L_{A/A^{\eta}}
  }
\end{equation}

We next prove a connectivity estimate for $\delta_f$ analogous to the 
connectivity estimate of \fullref{thm:connectivity} for $\epsilon_f$.  
This result again could also be easily deduced from the result for 
$\epsilon_f$.

\begin{prop}
  \label{prop:delta}
  Let $(A, \eta \colon L_A \to M[1])$ be an $n$--connective derivation, 
  and let $f \colon A^{\eta} \to A$ be the corresponding square-zero 
  extension.  Then $\fib (\delta_f) \in \Cat{C}_{\geq 2n+2}$, where 
  $\delta_f \colon M[1] \to L_{A/A^{\eta}}$ is the canonical morphism.
\end{prop}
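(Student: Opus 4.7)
My plan is to deduce the connectivity bound on $\fib(\delta_f)$ from the analogous bound on $\fib(\epsilon_f)$ provided by \fullref{thm:connectivity}. The key observation is that, although the unit map $\iota \colon M[1] \to A \otimes_{A^{\eta}} M[1]$ is only $(2n+1)$-connective in general, it admits a canonical retraction whose fiber is $(2n+2)$-connective; this is just enough to transfer the estimate from $\epsilon_f$ to $\delta_f$.

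Concretely, I would first apply \fullref{thm:connectivity} to the square-zero extension $f \colon A^{\eta} \to A$. Since $\cofib(f) \simeq M[1] \in \Cat{C}_{\geq n+1}$, the theorem yields $\epsilon_f \colon A \otimes_{A^{\eta}} M[1] \to L_{A/A^{\eta}}$ with $\fib(\epsilon_f) \in \Cat{C}_{\geq 2n+2}$. Because $M$ is intrinsically an $A$-module (as the fiber of a square-zero extension with quotient $A$), multiplication gives an $A$-linear map $\mu \colon A \otimes_{A^{\eta}} M[1] \to M[1]$ which, by the triangle identity for the base-change adjunction between $A^{\eta}$- and $A$-modules, is a retraction of $\iota$. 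Tensoring the fiber sequence $M \to A^{\eta} \to A$ of $A^{\eta}$-modules with $M[1]$ over $A^{\eta}$ produces
\[
  (M \otimes_{A^{\eta}} M)[1] \to M[1] \xrightarrow{\iota} A \otimes_{A^{\eta}} M[1],
\]
and the splitting identifies $\fib(\mu)$ with $\cofib(\iota) \simeq (M \otimes_{A^{\eta}} M)[2]$, which lies in $\Cat{C}_{\geq 2n+2}$ by \fullref{ass:t}(iii) together with the connectivity of $M$.

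The step I expect to require the most care is verifying the compatibility $\epsilon_f \simeq \delta_f \circ \mu$. Under the base-change adjunction, an $A$-linear map out of $A \otimes_{A^{\eta}} M[1]$ is determined by its precomposition with $\iota$, so it suffices to check that $\epsilon_f \circ \iota \simeq \delta_f$ as $A^{\eta}$-linear maps $M[1] \to L_{A/A^{\eta}}$. Both sides are extracted from the universal derivation classifying the square-zero extension $f$, so the identification is formal, but tracking it cleanly through Lurie's construction of $\epsilon_f$ and of the co-unit of $\Psi \dashv \Phi$ is the technical heart of the argument. Granting this factorization, the fiber sequence $\fib(\mu) \to \fib(\epsilon_f) \to \fib(\delta_f)$ associated to the composition $\delta_f \circ \mu$ exhibits $\fib(\delta_f)$ as the cofiber of a map between two objects of $\Cat{C}_{\geq 2n+2}$, whence $\fib(\delta_f) \in \Cat{C}_{\geq 2n+2}$.
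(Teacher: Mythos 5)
Your proof is correct, but it takes a genuinely different route from the paper. The paper's own proof is a two-line appeal to \fullref{thm:extension}: it observes that $\trunc{2n+1}{\delta_f}$ is the counit of the restricted adjunction $\Phi_{n-\sm} \leftrightarrows \Psi_{n-\sm}$, hence an equivalence, so $\fib(\delta_f) \in \Cat{C}_{\geq 2n+2}$. You instead deduce the estimate from \fullref{thm:connectivity}, which is exactly the alternative the paper mentions but does not carry out (``this result again could also be easily deduced from the result for $\epsilon_f$''). Your key move --- trading the unit $\iota$, which is only $(2n+1)$--connective, for the retraction $\mu$, so that $\epsilon_f \simeq \delta_f \circ \mu$ and the composite fiber sequence exhibits $\fib(\delta_f)$ as $\cofib\bigl(\fib(\mu) \to \fib(\epsilon_f)\bigr)$ with $\fib(\mu) \simeq \cofib(\iota) \simeq (M \otimes_{A^{\eta}} M)[2] \in \Cat{C}_{\geq 2n+2}$ --- is precisely what is needed to reach $2n+2$ rather than $2n+1$, and the individual steps (triangle identity, splitting of a retraction in a stable category, connectivity of the relative tensor product over the connective algebra $A^{\eta}$, closure of $\Cat{C}_{\geq 2n+2}$ under cofibers) all check out. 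The one step you assert rather than verify is $\epsilon_f \circ \iota \simeq \delta_f$; this is true --- in Lurie's construction $\epsilon_f$ is the $A$--linear extension of the map $\cofib(f) \to L_{A/A^{\eta}}$ induced by the universal derivation, which is $\delta_f$ --- and it is of the same definitional-unwinding nature as what the paper's proof leaves implicit, namely the identification of $\trunc{2n+1}{\delta_f}$ with the counit of the $n$--small adjunction (together with the reduction to the $n$--small case when $M$ is not $2n$--truncated, which the paper glosses over). In exchange for being longer, your argument rests only on the connectivity theorem for $\epsilon_f$ rather than on the full equivalence theorem for $n$--small extensions.
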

\begin{proof}
  We have to show that $\trunc{2n+1}{\delta_f}$ is an equivalence. But 
  $\trunc{2n+1}{\delta_f}$ is the co-unit of the adjunction $ \Phi 
  _{n-\sm}\leftrightarrows \Psi_{n-\sm}$, which is an equivalence.  
\end{proof}

Thus given an $n$--connective obstruction theory $(A, \phi \colon E \to 
L_A)$ we can form the square-zero extension $A^{\eta} \to A$ 
corresponding to the cofiber sequence
\[
  E \overset{\phi}{\to} L_A \overset{\eta}{\to} K
\]
and the cotangent complex $A^{\eta}$ is an excellent approximation of 
$E$.  More precisely, the canonical morphism $\phi' \colon E \to A 
\otimes _{A^{\eta}} L_{A^{\eta}}$ is $(2n+3)$--connective.

To go further, we have to assume that the morphism $ \phi' \colon E \to 
A \otimes _{A^{\eta}} L_{A^{\eta}}$ lifts to the square-zero extension 
$A^{\eta} \to A$.

\begin{defn}
  Let $(A, \phi \colon E \to L_A)$ be an $n$--connective obstruction 
  theory, and let $A^{\eta} \to A$ be the corresponding square-zero 
  extension. The obstruction theory \emph{lifts to $A^{\eta}$} if there 
  exists an obstruction theory
  \[
    \phi^{\eta} \colon E^{\eta} \to L_{A^{\eta}}
  \]
  such that
  \[
    \xymatrix{
      A \otimes_{A^{\eta}} E^{\eta} \ar[r] \ar[d]_{\simeq} & A 
      \otimes_{A^{\eta}} L_{A^{\eta}} \ar[d]^{\simeq}\\
      E \ar[r]^{\phi'} & A \otimes_{A^{\eta}} L_{A^{\eta}}
    }
  \]
  commutes and the left vertical map is an equivalence.
\end{defn}
Assuming that the obstruction theory lifts to $A^{\eta}$, we can define 
a further square-zero extension $A^{\theta} \to A^{\eta}$. This 
square-zero extension is defined by completing the lifted obstruction 
theory to a cofiber square
\[
  E^{\eta} \overset{\phi^{\eta}}{\to} L_{A^{\eta}} \overset{\theta}{\to} 
  K^{\eta}
\]
and taking the square-zero extension corresponding to $\theta$. We want 
to know the connectivity properties of $K^{\eta}$.
\begin{lem}
  \label{lem:KConnect}
  Assume that $\phi \colon E \to L_A$ is an $n$--connective obstruction 
  theory that lifts to $A^{\eta}$. Then $K^{\eta}$ is 
  $(2n+2)$--connective.
\end{lem}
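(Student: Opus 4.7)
The plan is to first identify $A \otimes_{A^{\eta}} K^{\eta}$ with $\fib(\delta_f)$ from the fundamental diagram \eqref{eq:FundDiag}, and then deduce the connectivity of $K^{\eta}$ itself by exploiting the fact that the fiber of $A^{\eta} \to A$ carries a natural $A$-module structure.

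First I would apply the base-change functor $(-) \otimes_{A^{\eta}} A$ to the cofiber sequence $E^{\eta} \overset{\phi^{\eta}}{\longrightarrow} L_{A^{\eta}} \overset{\theta}{\longrightarrow} K^{\eta}$ defining $K^{\eta}$. By the commutative square that defines ``lifts to $A^{\eta}$'', the resulting cofiber sequence is canonically identified with the top row of \eqref{eq:FundDiag},
$$E \overset{\phi'}{\longrightarrow} A \otimes_{A^{\eta}} L_{A^{\eta}} \overset{\eta'}{\longrightarrow} \fib(\delta_f).$$
Consequently $A \otimes_{A^{\eta}} K^{\eta} \simeq \fib(\delta_f)$, which by \fullref{prop:delta} lies in $\Cat{C}_{\geq 2n+2}$.

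Next I would use the fiber sequence of $A^{\eta}$--modules
$$M \to A^{\eta} \to A,$$
where $M$ denotes $\fib(A^{\eta} \to A)$. Since $K \simeq M[1]$ and $K \in \Cat{C}_{\geq n+1}$, we have $M \in \Cat{C}_{\geq n}$. Tensoring with $K^{\eta}$ over $A^{\eta}$ yields
$$M \otimes_{A^{\eta}} K^{\eta} \to K^{\eta} \to A \otimes_{A^{\eta}} K^{\eta}.$$
The key observation is that $M$ is in fact an $A$--module (the multiplication $M \otimes_{A^{\eta}} M \to M$ being nullhomotopic since $A^{\eta} \to A$ is a square-zero extension), so
$$M \otimes_{A^{\eta}} K^{\eta} \simeq M \otimes_A \bigl( A \otimes_{A^{\eta}} K^{\eta} \bigr).$$
By the standard compatibility of the relative tensor product with the $t$--structure over a connective algebra, this lies in $\Cat{C}_{\geq n+(2n+2)} = \Cat{C}_{\geq 3n+2}$. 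The fiber sequence then forces $K^{\eta} \in \Cat{C}_{\geq 2n+2}$, as claimed.

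The main obstacle is the identification in the first step: once one unpacks the definition of ``lifts to $A^{\eta}$'' and matches the resulting cofiber sequence with the top row of \eqref{eq:FundDiag}, the rest is a routine consequence of \fullref{prop:delta} together with the $A$--module structure on the fiber of a square-zero extension.
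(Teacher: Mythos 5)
Your proof is correct, and it differs from the paper's only in its second half. The first step is exactly the paper's: the paper asserts $A \otimes_{A^{\eta}} K^{\eta} \simeq \fib(\delta_f)$ ``by definition'', which is precisely what you justify by base-changing the cofiber sequence $E^{\eta} \to L_{A^{\eta}} \to K^{\eta}$ along $A^{\eta} \to A$ and matching it, via the lift square, with the top row of \eqref{eq:FundDiag}; the connectivity of $\fib(\delta_f)$ then comes from \fullref{prop:delta} in both arguments. Where you genuinely diverge is in descending connectivity from $A \otimes_{A^{\eta}} K^{\eta}$ to $K^{\eta}$: the paper argues by contradiction with a minimal nonvanishing homotopy group, using $\pi_0 A^{\eta} = \pi_0 A$ (hence implicitly $n \geq 1$) and the fact that the bottom homotopy group of a base change is the algebraic base change of the bottom homotopy group; you instead tensor the fiber sequence $M \to A^{\eta} \to A$ with $K^{\eta}$ and bound $M \otimes_{A^{\eta}} K^{\eta} \simeq M \otimes_A \bigl(A \otimes_{A^{\eta}} K^{\eta}\bigr)$ using the $A$--module structure on $M$ and the compatibility of relative tensor products with the $t$--structure. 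Both routes are sound and of comparable length; yours has the mild advantage of not needing the $\pi_0$--isomorphism, and there is no circularity since the connectivity of $M \otimes_{A^{\eta}} K^{\eta}$ is deduced from that of $A \otimes_{A^{\eta}} K^{\eta}$ rather than of $K^{\eta}$. One small imprecision: the nullhomotopy of $M \otimes_{A^{\eta}} M \to M$ is not in itself what endows $M$ with an $A$--module structure; rather, in the construction of square-zero extensions the fiber of $A^{\eta} \to A$ is identified, as an $A^{\eta}$--module, with the given $A$--module $M$ restricted along $A^{\eta} \to A$ (as recalled in the paper's background section), and it is this identification that licenses $M \otimes_{A^{\eta}} K^{\eta} \simeq M \otimes_A \bigl(A \otimes_{A^{\eta}} K^{\eta}\bigr)$. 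With that justification substituted, your argument is complete.
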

\begin{proof}
  By definition we have $A \otimes_{A^{\eta}} K^{\eta} \simeq 
  \fib(\delta_f)$, and $\fib(\delta_f)$ is $(2n+2)$--connective. Assume 
  there exists a $j < (2n+2)$ such that $\pi_j(K^{\eta})$ is not zero, 
  and assume that $j$ is minimal. Since $\pi_0 A^{\eta} = \pi_0 A$ 
  holds, we then have
  \begin{align*}
    \pi_j (K^{\eta}) &= \pi_j (K^{\eta}) \otimes _{\pi_0 A^{\eta}} \pi_0 
    A = \pi_j (K^{\eta} \otimes_{A^{\eta}} \pi_0 A)\\
    &= \pi_j (K^{\eta} \otimes_{A^{\eta}} \otimes A \otimes_A \pi_0 A) = 
    \pi_j ( \fib(\delta_f) \otimes_A \pi_0 A)
  \end{align*}
  non-zero, which contradicts that $\fib(\delta_f)$ is 
  $(2n+2)$--connective.
\end{proof}

We thus deduce that the derivation $\theta \colon L_{A^{\eta}} \to 
K^{\eta}$ is $(2n+1)$--connective.

We now want to study if the cotangent complex of $L_{A^{\theta}}$ gives 
a better approximation of $E$ than $L_{A^{\eta}}$.

\begin{lem}
  \label{lem:epsilon_lift}
  Assume that $\phi \colon E \to L_A$ lifts to $A^{\eta}$, and let $g 
  \colon A^{\theta}\to A^{\eta}$ be the corresponding square-zero 
  extension with fundamental diagram
  \[
    \xymatrix{
      E^{\eta} \ar[r]^{\phi^{\eta \prime}} \ar[d] & A^{\eta} \otimes 
      _{A^{\theta}} L_{A^{\theta}} \ar[r]^{\theta '} \ar[d] & 
      \fib(\delta_g)  \ar[d] \\
      E^{\eta} \ar[r]^{\phi^{\eta}} \ar[d] & L_{A^{\eta}} \ar[d]^{d} 
      \ar[r]^{\theta} & K^{\eta} \ar[d]^{\delta_g} \\
      0 \ar[r] & L_{A^{\eta}/A^{\theta}} \ar[r] & 
      L_{A^{\eta}/A^{\theta}}
    }
  \]
  Let $K = \cofib(\phi)$. Then  a canonical $(4n+4)$--connective 
  morphism
  \[
    \delta_{gf} \colon K \to L_{A/A^{\theta}}
  \]
  exists.
\end{lem}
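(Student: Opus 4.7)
The plan is to construct the map $\delta_{gf}$ explicitly by iterating the fundamental diagram, and then to bound its fiber using two applications of the octahedral axiom. The starting observation is that the lifted obstruction theory on $A^\eta$ already provides, via the top row of the fundamental diagram displayed in the statement, a canonical refinement $\phi^{\eta\prime} \colon E^\eta \to A^\eta \otimes_{A^\theta} L_{A^\theta}$ of $\phi^\eta$. Base-changing along $A^\eta \to A$ and using the equivalence $A \otimes_{A^\eta} E^\eta \simeq E$ supplied by the lift condition yields a map $\phi'' \colon E \to A \otimes_{A^\theta} L_{A^\theta}$ whose composition with the canonical map $r \colon A \otimes_{A^\theta} L_{A^\theta} \to L_A$ is equivalent to $\phi$. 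Applying the octahedral axiom to the factorization $\phi = r \circ \phi''$ then produces a cofiber sequence $\cofib(\phi'') \to K \to L_{A/A^\theta}$, and I define $\delta_{gf}$ to be the second map in this sequence, so that $\fib(\delta_{gf}) \simeq \cofib(\phi'')$.

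To estimate $\cofib(\phi'')$ I exploit a second factorization: by construction $\phi' = s \circ \phi''$, where $s \colon A \otimes_{A^\theta} L_{A^\theta} \to A \otimes_{A^\eta} L_{A^\eta}$ is the base change of the canonical map in the cotangent cofiber sequence for $A^\theta \to A^\eta$. The octahedral axiom now yields a cofiber sequence
\[
\cofib(\phi'') \longrightarrow \cofib(\phi') \longrightarrow \cofib(s).
\]
By the lift condition $\phi'$ agrees with $A \otimes_{A^\eta} \phi^\eta$, so $\cofib(\phi') \simeq A \otimes_{A^\eta} K^\eta$; meanwhile, base change of the cotangent cofiber sequence for $g$ identifies $\cofib(s) \simeq A \otimes_{A^\eta} L_{A^\eta/A^\theta}$. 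A 3-by-3 comparison then identifies the induced map $\cofib(\phi') \to \cofib(s)$ with the base change $A \otimes_{A^\eta} \delta_g$ of the co-unit map appearing in the fundamental diagram for $g$. Since $\theta \colon L_{A^\eta} \to K^\eta$ is a $(2n+1)$-connective derivation by \fullref{lem:KConnect}, \fullref{prop:delta} yields $\fib(\delta_g) \in \Cat{C}_{\geq 4n+4}$, and this connectivity is preserved under base change along $A^\eta \to A$ since $A$ is connective as an $A^\eta$-module; hence $\cofib(\phi'') \in \Cat{C}_{\geq 4n+4}$, as desired.

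The main obstacle I anticipate is the coherent identification of the induced morphism $\cofib(\phi') \to \cofib(s)$ with $A \otimes_{A^\eta} \delta_g$. This is essentially a naturality statement asserting that the entire fundamental diagram for the square-zero extension $g$ on $A^\eta$ base-changes coherently to $A$, and it is where essentially all of the diagrammatic bookkeeping is concentrated; once this coherence is in place, the connectivity conclusion follows mechanically from \fullref{prop:delta} and the preservation of connectivity under tensoring with connective modules.
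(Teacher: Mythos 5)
Your construction is correct, but it reaches $\delta_{gf}$ by a genuinely different route than the paper. The paper works on the $K$--side: it maps the trivial cofiber sequence $K \to K \to 0$ into the transitivity sequence $L_{A/A^{\theta}} \to L_{A/A^{\eta}} \to A \otimes_{A^{\eta}} L_{A^{\eta}/A^{\theta}}[1]$, using $\delta_f$ in the middle column and, in the right column, the identification $\cofib(\delta_f) \simeq A \otimes_{A^{\eta}} K^{\eta}[1]$ together with $\id \otimes \delta_g[1]$; the map $\delta_{gf}$ is then the induced map on fibers, and $\cofib(\delta_{gf}) \simeq \fib(\id \otimes \delta_g[1])$. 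You instead work on the $E$--side, factoring $\phi$ itself through $A \otimes_{A^{\theta}} L_{A^{\theta}}$ via the base change of $\phi^{\eta\prime}$ and applying the octahedral axiom twice; your coherence burden is concentrated in the single naturality statement that the exact functor $A \otimes_{A^{\eta}} (-)$ carries the octahedron for $\phi^{\eta} = \mathrm{can} \circ \phi^{\eta\prime}$ (the fundamental diagram for $g$) to the corresponding diagram over $A$, identifying $\cofib(\phi') \to \cofib(s)$ with $A \otimes_{A^{\eta}} \delta_g$ --- which is no heavier than the unstated compatibility the paper needs to assemble its $3 \times 3$ diagram. Both arguments converge on the same identification $\fib(\delta_{gf}) \simeq A \otimes_{A^{\eta}} \fib(\delta_g)$ and the same numerical input, namely \fullref{prop:delta} applied to the $(2n+1)$--connective derivation $\theta$ together with preservation of connectivity under tensoring with the connective $A^{\eta}$--algebra $A$, so the bound $4n+4$ comes out identically. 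The one thing the paper's construction exhibits by design and yours leaves implicit is the commutativity of $\delta_{gf}$ with $\delta_f$ over the canonical map $L_{A/A^{\theta}} \to L_{A/A^{\eta}}$, which is what is actually used in \fullref{thm:algresult} to organize the maps $\delta_m$ into a tower before passing to the limit; with your construction this compatibility requires one further, routine, naturality check.
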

\begin{proof}
  From the composition $A^{\theta} \overset{g}{\to} A^{\eta} 
  \overset{f}{\to} A$ we have the cofiber sequence $L_{A/A^{\theta}} \to 
  L_{A/A^{\eta}} \to A \otimes_{A^{\eta}} L_{A^{\eta}/A^{\theta}}[1]$.  
  Using $\delta_f$ and $\delta_g$ we obtain a diagram of cofiber 
  sequences
  \[
    \xymatrix{
      K \ar[r] \ar[d]_{\delta_{gf}} & K \ar[r] \ar[d]_{\delta_f} & 0 
      \ar[d]\\
      L_{A/A^{\theta}} \ar[r] \ar[d] & L_{A/A^{\eta}} \ar[r] \ar[d] & A 
      \otimes_{A^{\eta}}L_{A^{\eta}/A^{\theta}}[1] \ar[d]^{=}\\
      \fib( \id \otimes \delta_g [1]) \ar[r] & A \otimes_{A^{\eta}} 
      K^{\eta}[1] \ar[r]^{\id \otimes \delta_g[1]}&A 
      \otimes_{A^{\eta}}L_{A^{\eta}/A^{\theta}}[1]\\
    }
  \]
  defining $\delta_{gf}$.

  Since $\theta \colon L_{A^{\eta}} \to K^{\eta}$ is a 
  $(2n+1)$--connective derivation, it follows that $\delta_g$ is 
  $(4n+4)$-connective, and thus $\id \otimes \delta_g[1]$ is 
  $(4n+5)$--connective. Identifying $\cofib(\delta_{gf})$ with $\fib(\id 
  \otimes \delta_g[1])$ the connectivity claim follows.
\end{proof}
\begin{cor}
  Assume that $\phi \colon E \to L_A$ lifts to $A^{\eta}$, and let $g 
  \colon A^{\theta}\to A^{\eta}$ be the corresponding square-zero 
  extension. Then there is a canonical $(4n+3)$--connective morphism
  \[
    E \to A \otimes _{A^{\theta}} L_{A^{\theta}}
  \]
\end{cor}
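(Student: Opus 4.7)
The plan is to mirror the construction of the fundamental diagram \eqref{eq:FundDiag}, but applied to the composite square-zero extension $A^{\theta} \to A$ in place of the single step $A^{\eta} \to A$. Concretely, I would assemble in $\Mod_A$ the following $3 \times 3$ diagram whose middle row is the cofiber sequence coming from the obstruction theory, whose middle column is the cotangent complex cofiber sequence of the composition $A^{\theta} \to A$, and whose rightmost column records the canonical morphism $\delta_{gf}$ produced by \fullref{lem:epsilon_lift}:
\[
  \xymatrix{
    E \ar[r]^-{\phi''} \ar[d] & A \otimes_{A^{\theta}} L_{A^{\theta}} \ar[r] \ar[d] & \fib(\delta_{gf}) \ar[d] \\
    E \ar[r]^{\phi} \ar[d] & L_A \ar[r]^{\eta} \ar[d]^{d} & K \ar[d]^{\delta_{gf}} \\
    0 \ar[r] & L_{A/A^{\theta}} \ar[r]^{=} & L_{A/A^{\theta}}
  }
\]
The bottom row and the leftmost column are trivially cofiber sequences, the middle row is the cofiber sequence defining the obstruction theory, the middle column is the cofiber sequence of the composition $A^{\theta} \to A$, and the rightmost column is a cofiber sequence by definition of $\fib(\delta_{gf})$. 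Commutativity of the lower right square is precisely the compatibility of $\delta_{gf}$ with the natural derivations, which is built into the construction in \fullref{lem:epsilon_lift}. The $3 \times 3$ lemma in the stable $\infty$-category $\Mod_A$ then forces the top row to also be a cofiber sequence, and the morphism $\phi'' \colon E \to A \otimes_{A^{\theta}} L_{A^{\theta}}$ produced in the upper-left corner is the sought-after canonical morphism.

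The connectivity estimate is then immediate. From the top cofiber sequence we identify $\cofib(\phi'') \simeq \fib(\delta_{gf})$, equivalently $\fib(\phi'') \simeq \Omega \fib(\delta_{gf})$. By \fullref{lem:epsilon_lift} one has $\fib(\delta_{gf}) \in \Cat{C}_{\geq 4n+4}$, hence $\Omega \fib(\delta_{gf}) \in \Cat{C}_{\geq 4n+3}$, so $\phi''$ is $(4n+3)$-connective as claimed. There is no genuine obstacle here; essentially the entire content is packaged into the construction of $\delta_{gf}$ and the connectivity estimate $\fib(\delta_{gf}) \in \Cat{C}_{\geq 4n+4}$ established in the preceding lemma, and the corollary just extracts its consequence on $L_{A^{\theta}}$.
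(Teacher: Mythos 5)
Your proof is correct and is essentially the paper's own argument: the paper's proof consists of exactly this $3\times 3$ diagram of cofiber sequences built from $\delta_{gf}$, with the top row yielding the morphism $E \to A \otimes_{A^{\theta}} L_{A^{\theta}}$. You merely make explicit the connectivity bookkeeping ($\cofib \simeq \fib(\delta_{gf}) \in \Cat{C}_{\geq 4n+4}$, hence the fiber lies in $\Cat{C}_{\geq 4n+3}$) that the paper leaves implicit.
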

\begin{proof}
  Using $\delta_{gf}$, we obtain a diagram of cofiber squares
  \[
    \xymatrix{
      E \ar[r] \ar[d] & A \otimes_{A^{\theta}} L_{A^{\theta}} \ar[r] 
      \ar[d] & \fib(\delta_{gf}) \ar[d]\\
      E \ar[r] \ar[d] & L_A \ar[r] \ar[d] & K \ar[d]^{\delta_{gf}}\\
      0 \ar[r] & L_{A/A^{\theta}} \ar[r] & L_{A/A^{\theta}}
    }
  \]
\end{proof}

Finally we need a result allowing us to compute the cotangent complex of 
an increasingly connected tower of square-zero extensions.

\begin{lem}
  \label{lem:cotoftower}
  Let
  \[
    A_0 \overset{f_1}{\longleftarrow} A_1 \overset{f_2}{\longleftarrow} 
    A_2 \overset{f_3}{\longleftarrow} \dots
  \]
  be a sequence of square-zero extensions where $f_n$ is $n$--connective.  
  Let $B$ be the inverse limit $\lim \{A_n\}$. Then
  \[
    L_B \simeq \lim \{ L_{A_n} \}.
  \]
\end{lem}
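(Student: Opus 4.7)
The canonical maps $B \to A_n$ induce compatible morphisms $L_B \to L_{A_n}$ in $\Cat{C}$ and hence a comparison morphism $L_B \to \lim \{L_{A_n}\}$. The plan is to show this is an equivalence by producing the connectivity estimate $\fib(L_B \to L_{A_n}) \in \Cat{C}_{\geq n+1}$ for every $n$, and then passing to the inverse limit.

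I would first show that $B \to A_n$ itself has fiber in $\Cat{C}_{\geq n+1}$. For each $m > n$, the composition $A_m \to \cdots \to A_n$ has fiber built iteratively from the fibers of $f_k$ for $n < k \leq m$, each lying in $\Cat{C}_{\geq k} \subseteq \Cat{C}_{\geq n+1}$; hence $\fib(A_m \to A_n) \in \Cat{C}_{\geq n+1}$, and the same property survives passing to the sequential limit to produce $\fib(B \to A_n)$, granted the left completeness of the $t$-structure in the relevant examples. Now apply \fullref{thm:connectivity} to $B \to A_n$ (whose cofiber therefore lies in $\Cat{C}_{\geq n+2}$) to deduce $L_{A_n/B} \in \Cat{C}_{\geq n+2}$. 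From the cotangent cofiber sequence $L_B \otimes_B A_n \to L_{A_n} \to L_{A_n/B}$, the first arrow is $(n+1)$-connective; composing with $L_B \to L_B \otimes_B A_n$, whose fiber $L_B \otimes_B \fib(B \to A_n)$ again lies in $\Cat{C}_{\geq n+1}$, yields $\fib(L_B \to L_{A_n}) \in \Cat{C}_{\geq n+1}$ as claimed.

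Since fibers commute with limits, $\fib(L_B \to \lim \{L_{A_n}\}) \simeq \lim_n \fib(L_B \to L_{A_n})$, an object lying in $\Cat{C}_{\geq n+1}$ for every $n$. By left completeness of the $t$-structure this object vanishes, yielding the required equivalence. The main subtle point is the interplay between inverse limits and connectivity: one must ensure that sequential limits of towers with strictly increasingly connective transition maps remain arbitrarily connective. This is the single nontrivial ingredient beyond the connectivity theorem and the cotangent cofiber sequence, and it is implicit in the ambient setting of \fullref{ass:t} as used in the main examples of the paper.
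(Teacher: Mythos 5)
Your argument is correct and follows essentially the same route as the paper: there the equivalences $\tau_{\leq n} B \simeq \tau_{\leq n} A_n$ are said to induce equivalences $\tau_{\leq n} L_B \simeq \tau_{\leq n} L_{A_n}$, after which one passes to the limit of the Postnikov towers, which is exactly your connectivity estimate for $\fib(L_B \to L_{A_n})$ followed by the limit step. Your explicit appeal to left completeness and to the behaviour of sequential limits of increasingly connective objects is the same convergence the paper's proof uses implicitly when it identifies $L_B$ and $\lim \{L_{A_n}\}$ with the limits of their truncations.
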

\begin{proof}
  Passing to the Postnikov decomposition of $B$ we have a sequence of 
  equivalences:
  \[
    \xymatrix{
      \trunc{0}{B} \ar[d]_{\simeq} & \trunc{1}{B} \ar[d]_{\simeq} \ar[l] 
      &\trunc{2}{B} \ar[d]_{\simeq} \ar[l]& \ar[l] \dots \\
      \trunc{0}{A_0} & \trunc{1}{A_1} \ar[l] & \trunc{2}{A_2} \ar[l] & 
      \dots \ar[l]
    }
  \]
  This induces equivalences on the Postnikov decomposition of $L_B$
  \[
    \xymatrix{
      \trunc{0}{L_B} \ar[d]_{\simeq} & \trunc{1}{L_B} \ar[d]_{\simeq} 
      \ar[l] &\trunc{2}{L_B} \ar[d]_{\simeq} \ar[l]& \ar[l] \dots \\
      \trunc{0}{L_{A_0}} & \trunc{1}{L_{A_1}} \ar[l] & 
      \trunc{2}{L_{A_2}} \ar[l] & \dots \ar[l].
    }
  \]
  and the claim follows.
\end{proof} 

To phrase the main result we have to introduce a bit of notation. Given 
an $n$--connective obstruction theory $\phi \colon E \to L_A$ with 
associated derivation $ \eta \colon L_A \to K$, we set $A_1 := A^{\eta}$ 
to be corresponding square-zero extension. Assuming that it is possible 
to choose a lift of the obstruction theory to $A_1$, we can define a 
further square-zero extension $A_2 \to A_1$. If we again assume that it 
is possible to choose a lift of the obstruction theory to $A_2$, we 
obtain a further square-zero extension $A_3$ and so on. We fix this in 
the following definition.

\begin{defn}
  Let $(A_0, \phi_0 \colon E_0 \to L_{A_0})$ be an $n$--connective 
  obstruction theory, and let $A_1 \to A_0$ be the corresponding 
  square-zero extension. An \emph{inductive system of lifts} of the 
  obstruction theory exists if for all $m \geq 0$ there exists a lift 
  $\phi_{m+1} \colon E_{m+1} \to L_{A_{m+1}}$ of the obstruction theory 
  $\phi_{m}$ to $A_{m+1}$. Here $A_{m+1} \to A_m$ is the square-zero 
  extension defined by the obstruction theory $\phi_m$.
\end{defn}

Note that it could happen that a certain choice of a lift $\phi_1 \colon 
E_1 \to L_{A_1}$ does not lift to the subsequently defined square-zero 
extension $A_2 \to A_1$, but a different choice $\phi_1' \colon E'_1 \to 
L_{A_1}$ does lift to the square-zero extension $A'_2 \to A_1$.

We now have all tools to prove the main result. Assuming that an 
inductive system of lifts of the obstruction theory exists, we will use 
the previous results to give a tower of increasingly connected 
square-zero extensions $A_{m+1}\to A_m$.  In every step we will measure 
the difference between $L_{A/A_{m}}$ and $K$ using the maps defined in 
\fullref{lem:epsilon_lift}. In every step the degrees in which a defect 
still exists will be pushed up by a factor of 2.

\begin{thm}
  \label{thm:algresult}
  Let $\Cat{C}$ be an $\infty$--category as in \fullref{ass:t}, and let 
  $A \in \CAlg(\Cat{C})$ be a connective commutative algebra object.  
  Assume that $(A, \phi \colon E \to L_A)$ is an $n$--connective 
  obstruction theory with $n \geq 1$, and let $\cofib(\phi)=K$.
  
  Then a pair
  \[
    \left( f \colon B \to A, \widetilde{\delta} \colon K \to L_{A/B} 
    \right)
  \]
  inducing the obstruction theory exists if and only if an inductive 
  system of lifts of the obstruction theory exists.
\end{thm}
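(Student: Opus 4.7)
The plan is to prove the two directions separately, the bulk of the work being the ``if'' direction. Assume an inductive system of lifts $\{\phi_m\colon E_m \to L_{A_m}\}_{m \geq 0}$ exists. I will build $B$ as the inverse limit of the tower
\[
  A_0 \overset{f_1}{\longleftarrow} A_1 \overset{f_2}{\longleftarrow} A_2 \overset{f_3}{\longleftarrow} \cdots
\]
of square-zero extensions produced by iteratively applying the square-zero extension construction to each $\phi_m$, and extract the pair $(f, \widetilde{\delta})$ from this limit.

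The first step is a connectivity estimate for the tower. By \fullref{lem:conn}, $f_1$ is $n$-connective. By \fullref{lem:KConnect}, the cofiber $K^{\eta}$ of the lift $\phi_1$ is $(2n+2)$-connective, so the derivation defining $f_2$ is $(2n+1)$-connective; iterating \fullref{lem:KConnect} along with the argument of \fullref{lem:epsilon_lift} at each stage, the connectivity of $f_{m+1}$ roughly doubles, so in particular it tends to $\infty$. This allows me to set $B := \lim_m A_m$ and verify condition (i) of \fullref{defn:induce}: the truncations $\trunc{k}{A_m}$ stabilize in $m$ for every fixed $k$, giving $\trunc{k}{B} \simeq \trunc{k}{A_m}$ for $m$ large enough, and in particular $\trunc{n-1}{f}$ is an equivalence.

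To establish condition (ii), I apply \fullref{lem:cotoftower} (after a reindexing permitted by the doubling connectivity) to get $L_B \simeq \lim_m L_{A_m}$. The corollary following \fullref{lem:epsilon_lift}, iterated along the tower, produces compatible morphisms $E \to A \otimes_{A_m} L_{A_m}$ whose connectivity grows without bound. Passing to the limit yields an equivalence $E \simeq A \otimes_B L_B$, and the dual construction assembles the co-unit maps $\delta_{f_m}$ into the desired equivalence $\widetilde{\delta}\colon K \to L_{A/B}$; the commutativity of the triangle in \fullref{defn:induce}(ii) is automatic because each $\delta_{f_m}$ is constructed from the co-unit of the adjunction $\Phi \leftrightarrows \Psi$.

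For the converse direction, suppose $(f\colon B \to A, \widetilde{\delta})$ induces the obstruction theory. Since $\trunc{n-1}{f}$ is an equivalence, the fiber of $f$ has the connectivity needed to factor $f$ as a tower of $n$-small square-zero extensions via \fullref{thm:extension}, producing candidates $A_m$. The lifted obstruction theory at each stage is obtained by transporting the equivalence $E \simeq A \otimes_B L_B$ of \fullref{rem:assphi} along $B \to A_m$, and its compatibility with the derivation defining $A_{m+1} \to A_m$ follows from the naturality of $\delta_{f_m}$. The main technical obstacle throughout is careful bookkeeping of connectivity under tensor products and inverse limits: verifying at each stage that the lifted cofibers $K^{\eta}_m$ remain highly connective enough to drive the doubling, and showing that $A \otimes_B(-)$ commutes with the relevant $\lim_m$ in the connectivity range needed to promote the compatible approximations $E \to A \otimes_{A_m} L_{A_m}$ to a genuine equivalence $E \simeq A \otimes_B L_B$ in the limit.
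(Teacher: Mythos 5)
Your ``if'' direction is essentially the paper's argument: build the canonical tower $A_0 \leftarrow A_1 \leftarrow A_2 \leftarrow \cdots$ of square-zero extensions, use \fullref{lem:KConnect} and \fullref{lem:epsilon_lift} to get the doubling connectivity of the comparison maps $\delta_m \colon K \to L_{A/A_m}$, set $B=\lim\{A_m\}$, and pass to the limit via \fullref{lem:cotoftower}; the bookkeeping issues you flag are real but are exactly the ones the paper also disposes of by the growing connectivity of the $\delta_m$. So that half is fine.

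The converse has a genuine gap. The statement ``an inductive system of lifts exists'' refers to a \emph{specific} tower: $A_{m+1}\to A_m$ must be the square-zero extension classified by the derivation $\theta_m=\cofib(\phi_m)$ attached to the previously chosen lift $\phi_m$. You instead factor $f\colon B\to A$ into a tower of square-zero extensions (a relative Postnikov-type decomposition) and propose to lift along \emph{that} tower. These towers do not agree in general: the canonical extension $A_{m+1}\to A_m$ has fiber $K_m[-1]$, which is typically not a truncation-type module arising in a Postnikov factorization of $f$, and \fullref{thm:extension} by itself does not produce any factorization of an $n$--connective morphism into square-zero extensions. Consequently you have verified lifts on the wrong tower, and the required inductive system of lifts is not produced. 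The repair is the paper's argument: never factor $f$ at all. Using $\widetilde{\delta}$, identify $\eta\colon L_A\to K$ with the canonical derivation $d\colon L_A\to L_{A/B}$, so that the obstruction to lifting $B\to A_m$ along the canonical extension $A_{m+1}\to A_m$ is the composite $A_m\otimes_B L_B \to L_{A_m}\to K_m$, which is nullhomotopic because it sits in a cofiber sequence; this gives a compatible map $B\to A_{m+1}$, and one then takes $\phi_{m+1}$ to be the base change $A_{m+1}\otimes_B L_B\to L_{A_{m+1}}$, whose compatibility with $\phi'_m$ and whose cofiber connectivity come for free from \fullref{rem:assphi}. Your idea of transporting $E\simeq A\otimes_B L_B$ along maps $B\to A_m$ is the right one, but those maps must be constructed by this obstruction-vanishing argument on the canonical tower, not read off from an independent factorization of $f$.
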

\begin{proof}
  We first assume that the an inductive system of lifts of the 
  obstruction theory exists.  Let $A = A_0$, and let $\eta_0 \colon 
  L_{A_0} \to K$ be the $n$--connective derivation associated to the 
  obstruction theory.  Since an inductive system of lifts of the 
  obstruction theory exists, we can inductively define a tower of 
  increasingly connected square-zero extensions $f_{m+1,m} \colon 
  A_{m+1} \to A_m$, where by \fullref{lem:KConnect} $f_{m+1,m}$ is $2^m 
  (n+1)-1$--connective.

  Now denote by $f_m$ the composition $f_{1,0} \circ \dots \circ 
  f_{m,m-1}$. Set $\delta_m$ to be the $2^{m+1}(n+1)$--connective 
  morphism obtain by applying \fullref{lem:epsilon_lift} to the 
  morphisms
  \[
    A_{m+1} \overset{f_{m+1,m}}{\longrightarrow} A_m 
    \overset{f_m}{\longrightarrow} A_0.
  \]
  Now define $B$ to be the inverse limit $\lim \{ A_m \}$. Using the 
  maps $\delta_m$ we have a series of maps
  \[
    \xymatrix{
       & & & K \ar[d]^{\delta_1} \ar[dl]^{\delta_2} \ar[dll]^{\delta _3} 
       \ar[dlll]\\
       \dots\ar[r]  & L_{A_0/A_3} \ar[r] & L_{A_0/A_2}\ar[r]& 
       L_{A_0/A_1}
    }
  \]
  where $\delta_m$ is $2^m(n+1)$--connective. Passing to the limit and 
  using \fullref{lem:cotoftower}, we have an equivalence 
  $\widetilde{\delta} \colon K \to \lim{L_{A/A_m}} \simeq L_{A/B}$.

  Conversely, assume that thre exists a pair $\left( f \colon B \to A, 
    \widetilde{\delta} \colon K \to L_{A/B} \right)$ inducing the 
  obstruction theory. We begin by proving that the induced obstruction 
  theory lifts to $A_1$. Recall that $A_1 \to A$ is defined to be the 
  square-zero extension corresponding to the derivation $\eta$ in the 
  cofiber sequence $A \otimes_B L_{B} \to L_A \overset{\eta}{\to} 
  L_{A/B}$. Thus the obstruction to lifting $B \to A$ to $A_1$ vanishes, 
  and we obtain a diagram
  \[
    \xymatrix{
      &A_1\ar[d]\\
      B\ar[ur] \ar[r] & A.
    }
  \]
  Now the obstruction theory obviously lifts to $A_1$ using $A_1 \otimes 
  _B L_{B} \to L_{A_1}$. Finally, assume the obstruction theory lifts to 
  the $m$--th inductively defined square-zero extension. We have to 
  prove that it lifts to the $(m+1)$--st level. By definition, we have a 
  tower of morphisms
  \[
    \xymatrix{
       & & & B \ar[d] \ar[dl] \ar[dll] \\
       A_{m+1} \ar[r]  & A_m \ar[r] & \dots \ar[r]& A
    }
  \]
  Again using the same argument as above, the obstruction to lifting $B 
  \to A_m$ to $A_{m+1}$ vanishes, and the obstruction theory lifts by 
  using $A_{m+1} \otimes_ B L_{B} \to L_{A_{m+1}}$.
\end{proof}

We can now apply this result in some examples.

\begin{exmp}
  Let $\Cat{C}=\Sp$ be the $\infty$--category of spectra. An object of 
  $\CAlg (\Sp)$ then is an $\IE_{\infty}$--ring. Discrete objects of 
  $\CAlg (\Sp)$ can be identified with ordinary commutative rings.  
  Applying the above theorem, it follows that a 1--connective 
  obstruction theory for a commutative ring $A$ is induced by some 
  $\IE_{\infty}$--ring $B$ with $\pi_0 B =A$ if and only if an inductive 
  system of lifts of the obstruction theory exists.
\end{exmp}

\begin{exmp}
  \label{exmp:overk}
  Let $k$ be a connective $\IE_{\infty}$--ring and let $\Cat{C}$ denote 
  the category $\Mod_{k}(\Sp)$ of $k$--module spectra. Define $\CAlg_k$ 
  to be $\CAlg(\Mod_k (\Sp))$. A discrete object of $\CAlg_k$ is an 
  ordinary commutative algebra over $\pi_0 k$. By the above theorem a 
  1--connective obstruction theory for a commutative $\pi_0 k$--algebra 
  $A$ is induced by an $\IE_{\infty}$--algebra $B$ over $k$ such that 
  $\pi_0 B=A$ if and only if an inductive system of lifts of the 
  obstruction theory exists. \footnote{The category $\CAlg_k$ can be 
    identified with the under-category $\CAlg(\Sp)_{k/}$. Under this 
    identification the cotangent complex associated to the category 
    $\CAlg_k$ can be identified with the relative cotangent complex 
    $L_{A/k}$.}
\end{exmp}

\begin{exmp}
  \label{exmp:simplicial}
  In the previous example, let $k$ be an ordinary commutative ring 
  containing the rationals $\IQ$ viewed as a discrete 
  $\IE_{\infty}$--ring. Then we can identify connective objects of 
  $\CAlg_k$ with the nerve of the category of simplicial commutative 
  $k$--algebras \cite[Proposition 8.1.4.20]{HigherAlgebra}. Applying the 
  above theorem, a 1--connective obstruction theory for a commutative 
  $k$--algebra is induced by a simplicial commutative $k$--algebra if 
  and only if an inductive system of lifts of the obstruction theory 
  exists.
\end{exmp}

\begin{exmp}
  \label{exmp:cdga}
  Taking $k$ as in the previous example, a further explicit model for 
  the $\infty$--category $\CAlg_k$ is given by commutative differential 
  graded algebras over $k$ \cite[Proposition 8.1.4.11]{HigherAlgebra}.  
  The above theorem thus shows that a 1--connective obstruction theory 
  on a commutative $k$--algebra is induced by a connective commutative 
  differential graded algebra if and only if an inductive system of 
  lifts of the obstruction theory exists.
\end{exmp}

\begin{rem}
  An important case not covered by \fullref{thm:algresult} is simplicial 
  algebras over $k$ where $k$ is a ordinary commutative ring not 
  necessarily containing $\IQ$. This case is important since it is a 
  homotopical algebra context in the sense of \cite{toen08} and leads to 
  derived algebraic geometry over any base ring. This case is not 
  covered by \fullref{thm:algresult} since simplicial algebras over $k$ 
  no longer provide a model for $\IE_{\infty}$--algebras over $k$ if $k$ 
  does not contain $\IQ$. Thus it is not directly possible to apply the 
  formalism developed in \cite{HigherAlgebra} to this case.

  Nevertheless, it should be possible to extend all results to this 
  case. The main tool we have used is comparing the target module $M[1]$ 
  in a derivation $(A, \eta \colon L_A \to M[1])$ to the relative 
  cotangent complex $L_{A/A^{\eta}}$ of the corresponding square-zero 
  extension $f \colon A^{\eta} \to A$ via the map $\delta_f$. This map 
  was defined via the adjunction between derivations and square-zero 
  extensions, and its connectivity properties where deduced from 
  \fullref{thm:extension}.  Proving the analogous results in the context 
  of simplicial algebras over any base $k$ would provide all necessary 
  tools to carry out the proof. Several results in this direction can be 
  found in \cite{toen08}. There the relative cotangent complex 
  $L_{A/A^{\eta}}$ is explicitly computed (\cite[Lemma  
  1.4.3.7]{toen08}) and is shown to have the same homotopy groups as 
  $M[1]$ up to degree $n+1$ for an $n$--connective module $M$ 
  (\cite[Lemma 2.2.2.7]{toen08}).  The main difficulty lies in extending 
  the connectivity estimate to the estimate of \fullref{prop:delta}.
\end{rem}

In \fullref{exmp:overk} one will typically impose some finiteness 
property on both the $\pi_0 k$--algebra $A$ and the obstruction theory 
$\phi \colon E \to L_{A /k}$.  We recall the relevant definition.

\begin{defn}
  Let $k$ be a connective $\IE_{\infty}$--ring, and let $A$ be a finitely 
  presented discrete commutative $\pi_0 k$--algebra equipped with a 
  1--connective obstruction theory $\phi \colon E \to L_A$. Let $m \geq 
  0$.
  \begin{enumerate}[(i)]
    \item The obstruction theory is an $m$--obstruction theory if $E \in 
      (\Mod_A)_{\leq m}$.
    \item The obstruction theory is perfect if $E$ is a perfect 
      $A$--module.
  \end{enumerate}
\end{defn}

We next want to ensure that if we start with a finitely presented 
commutative $\pi_0 k$--algebra equipped with a 1--connective perfect 
$m$--obstruction theory, any $k$--algebra inducing the obstruction theory 
satisfies a strong finiteness property. We briefly recall the relevant 
finiteness property following \cite[Definition  
8.2.5.26]{HigherAlgebra}.

\begin{defn}
  Let $k$ be a connective $\IE_{\infty}$--ring, and let $\CAlg_k=\CAlg 
  (\Mod_k(\Sp))$. A commutative $k$--algebra $B$ is \emph{locally of 
    finite presentation over $k$} if $B$ is a compact object of 
  $\CAlg_k$.
\end{defn}

Note that a finitely presented discrete commutative $\pi_0 k$--algebra 
viewed as an object in $\CAlg_k$ will usually not satisfy the above 
finiteness property. As an example one can take any finitely presented 
discrete commutative $\pi_0 k$--algebra with non-perfect cotangent 
complex, as in light of \cite[Theorem  8.4.3.17]{HigherAlgebra} perfectness 
of the cotangent complex is necessary for being locally of finite 
presentation in $\CAlg_k$.  Nevertheless, we will later see that given a 
finitely presented discrete commutative $\pi_0 k$--algebra $A$ equipped 
with an $n$--connective perfect $m$--obstruction theory, any commutative 
$k$--algebra $B$ inducing the obstruction theory does satisfy the above 
finiteness property, although $A$ itself will usually not.

\begin{rem}
  Note that in \cite[Definition 1.2.3.1]{toen08} slightly different 
  terminology is used. There a compact object of $\CAlg_k$ is called 
  \emph{finitely presented}, whereas Lurie reserves finitely presented 
  for algebras which lie in the smallest full subcategory which contains 
  finitely generated free algebras and is stable under retracts and 
  finite colimits.
\end{rem}
  
Before we begin we need a series of lemmas. These lemmas will allow us 
to deduce properties of an $A$--module $M$ from the respective properties 
of the $\pi _0 A$--module $\pi_0 A \otimes _A M$. Recall the following 
definitions for a connective $\IE_{\infty}$--ring $A$ and an $A$--module 
$M$:
\begin{enumerate}[(i)]
  \item $M$ is of Tor--amplitude $\leq n$ if for any discrete $A$--module 
    $N$ we have
    \[
      \pi_i (N \otimes _A M)=0
    \]
    for $i > n$.
  \item $M$ is perfect to order $n$ if $\trunc{n}{M}$ is a compact 
    object of $(\Mod_A)_{\leq n}$.
  \item $M$ is almost perfect if it is perfect to order $n$ for all 
    integers $n$.
\end{enumerate}

\begin{lem}
  \label{lem:toramp}
  Let $A$ be a connective $\IE _{\infty}$--ring and $M$ a connective 
  $A$--module. If $\pi_0  A \otimes _{A}M  $ has Tor--amplitude $\leq n$ 
  as $\pi_0 A$--module, then $M$ has Tor--amplitude $\leq n$ as 
  $A$--module.
\end{lem}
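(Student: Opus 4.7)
The plan is to reduce the claim over $A$ to the hypothesis over $\pi_0 A$ by a direct base-change argument, using that discrete modules are insensitive to the truncation $A \to \pi_0 A$.

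First, I would observe that any discrete $A$-module $N$ canonically acquires the structure of a $\pi_0 A$-module. Indeed, since $\pi_i N = 0$ for $i \neq 0$, the action map $A \otimes N \to N$ annihilates all positive homotopy of $A$, so it factors through $\pi_0 A \otimes N$; equivalently, the canonical map $\pi_0 A \otimes_A N \to N$ is an equivalence. This lets me treat $N$ as a module over $\pi_0 A$ via the unit map $A \to \pi_0 A$.

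Next, I would invoke the base-change equivalence for the relative tensor product along the ring map $A \to \pi_0 A$, namely
\[
N \otimes_A M \;\simeq\; N \otimes_{\pi_0 A} (\pi_0 A \otimes_A M).
\]
This is the standard associativity formula in the symmetric monoidal $\infty$-category of module spectra (see, e.g., the relevant parts of \cite{HigherAlgebra} on relative tensor products). Applying the hypothesis that $\pi_0 A \otimes_A M$ has Tor-amplitude $\leq n$ as a $\pi_0 A$-module to the discrete $\pi_0 A$-module $N$ gives $\pi_i\bigl(N \otimes_{\pi_0 A} (\pi_0 A \otimes_A M)\bigr) = 0$ for $i > n$, and hence $\pi_i(N \otimes_A M) = 0$ for $i > n$. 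Since $N$ was an arbitrary discrete $A$-module, this is the desired Tor-amplitude bound on $M$.

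There is essentially no serious obstacle here; the whole argument rests on the observation that a discrete $A$-module is canonically a $\pi_0 A$-module together with associativity of the relative tensor product, so the only point that requires any care is the identification $\pi_0 A \otimes_A N \simeq N$ for discrete $N$, which follows from the connectivity of $\fib(A \to \pi_0 A)$.
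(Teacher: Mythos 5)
Your argument is essentially the paper's: view the discrete module $N$ as a $\pi_0 A$--module, rewrite $N \otimes_A M \simeq N \otimes_{\pi_0 A} (\pi_0 A \otimes_A M)$ by associativity of the relative tensor product, and apply the hypothesis. One correction, though: your justification that ``the canonical map $\pi_0 A \otimes_A N \to N$ is an equivalence'' for discrete $N$ is false in general — the fiber of $A \to \pi_0 A$ is only $1$--connective, and one computes $\pi_2(\pi_0 A \otimes_A N) \cong \pi_1 A \otimes_{\pi_0 A} N$, which need not vanish (take $A$ the square-zero extension $\pi_0 A \oplus (\pi_0 A)[1]$ and $N = \pi_0 A$). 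Fortunately this claim is not what your argument uses: all you need is that a discrete $A$--module is canonically the restriction of scalars of a $\pi_0 A$--module (the heart of $\Mod_A$ is the ordinary category of $\pi_0 A$--modules), and with that the base-change step and the rest of the proof go through as written.
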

\begin{proof}
  Let $N$ be a discrete $A$--module. In particular, $N$ is a 
  $\pi_0 A$--module. The claim then follows from
  \[
    \pi_i(N \otimes _A M) = \pi_i (N \otimes _{\pi_0 A} \pi_0 A \otimes 
    _A M) = \Tor _i ^{\pi_0 A} (N, \pi_0 A  \otimes_A M). \proved
  \]
\end{proof}

\begin{lem}
  \label{lem:almperf}
  Let $A$ be a connective $\IE _{\infty}$--ring, $M$ a connective 
  $A$--module, and $n \geq 0$. If $\pi_0  A \otimes _{A}M  $ is perfect 
  to order $n$ as $\pi_0 A$--module, then $M$ is perfect to order $n$ as 
  $A$--module.
\end{lem}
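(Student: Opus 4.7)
The argument proceeds by induction on $n$, progressively refining a compact approximation to $M$ by attaching cells one degree at a time. The base case $n = 0$ is immediate: for connective $M$ one has $\pi_0 M = \pi_0(\pi_0 A \otimes_A M)$, and perfect-to-order-$0$ is literally a condition on $\pi_0 M$ as a $\pi_0 A$-module, so the hypothesis on $\pi_0 A \otimes_A M$ is the conclusion for $M$.

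For the inductive step, the hypothesis that $\pi_0 A \otimes_A M$ is perfect to order $n$ trivially implies the same statement for order $n-1$, so the inductive hypothesis gives a compact $A$-module $P$ and a map $f \colon P \to M$ with $N := \cofib(f) \in (\Mod_A)_{\geq n}$. I would then upgrade by attaching finitely many $n$-cells. Once a finite generating set $y_1, \ldots, y_l$ of $\pi_n N$ is in hand, realize it as a map $g \colon A^l[n] \to N$ and form the attaching map $h \colon A^l[n-1] \to P$ as the composite $A^l[n-1] \to N[-1] \to P$. Since $N[-1] \to P \to M$ is a fiber sequence, $f \circ h$ is null, so setting $P' := \cofib(h)$ the map $f$ extends to $f' \colon P' \to M$; a standard $3 \times 3$ argument identifies $\cofib(f')$ with $\cofib(g)$, which is $(n+1)$-connective because $\pi_n g$ is surjective. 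As $P'$ is still compact in $\Mod_A$, this exhibits $M$ as perfect to order $n$.

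The crux is therefore showing that $\pi_n N$ is finitely generated over $\pi_0 A$. Because $N$ is $n$-connective and $\pi_n N$ already carries a $\pi_0 A$-module structure, base change along $A \to \pi_0 A$ leaves the bottom Postnikov piece $\pi_n N[n] \to N$ unchanged, so $\pi_n N \simeq \pi_n(\pi_0 A \otimes_A N)$. Base-changing the cofiber sequence $P \to M \to N$ produces $\pi_0 A \otimes_A P \to \pi_0 A \otimes_A M \to \pi_0 A \otimes_A N$, whose first term is perfect over $\pi_0 A$ (since $P$ is compact over $A$) and whose second is perfect to order $n$ by hypothesis. Invoking closure of perfect-to-order-$n$ modules under finite colimits \cite[Proposition 7.2.4.11]{HigherAlgebra}, the third term is also perfect to order $n$; together with $n$-connectivity this forces $\pi_n(\pi_0 A \otimes_A N)$ to be finitely presented over $\pi_0 A$, in particular finitely generated. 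The main obstacles are exactly this closure statement and the Postnikov/base-change identification; both are routine with the machinery of \cite{HigherAlgebra} but demand careful bookkeeping.
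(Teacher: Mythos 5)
Your proof is correct, but it takes a different route through the inductive step than the paper does. The paper quotes the criterion from DAG VIII (Proposition 2.6.12): for a map $A^k \to M$ surjective on $\pi_0$, $M$ is perfect to order $n$ if and only if the fiber $M'$ is perfect to order $n-1$; it then chooses such a free cover, base changes the fiber sequence $M' \to A^k \to M$ along $A \to \pi_0 A$, and applies the inductive hypothesis to $M'$, so the induction descends through the module itself. You instead keep $M$ fixed, use the inductive hypothesis at order $n-1$ to get a perfect approximation $f \colon P \to M$ with $n$-connective cofiber $N$, and improve it by attaching $n$-cells; the work is then concentrated in showing $\pi_n N$ is finitely generated, which you do via the identification $\pi_n N \simeq \pi_n(\pi_0 A \otimes_A N)$ and the closure statement that the cofiber of a map from a perfect module into a module perfect to order $n$ is again perfect to order $n$. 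In effect you have opened up the black box that the paper cites: your cell-attachment argument is essentially the proof of the quoted DAG VIII results, so your version is more self-contained but needs the extra closure lemma, while the paper's is shorter given the citations. Two small points to fix: the reference to \cite[Proposition 7.2.4.11]{HigherAlgebra} is wrong, since ``perfect to order $n$'' is not defined in Higher Algebra; the closure property you need is the one stated in DAG VIII Section 2.6 (or SAG 2.7.2.1), and it is not literally ``closure under finite colimits'' but the asymmetric statement that in a cofiber sequence $M' \to M \to M''$ of connective modules with $M'$ perfect to order $n-1$ and $M$ perfect to order $n$, the cofiber $M''$ is perfect to order $n$. Also, perfectness to order $n$ of the $n$-connective module $\pi_0 A \otimes_A N$ gives finite generation of $\pi_n$, not finite presentation; since finite generation is all you use, this is harmless.
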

\begin{proof}
  We prove the claim by induction over $n$. For the case of $n=0$ recall 
  that $M$ is perfect to order 0 as $A$--module if and only if $\pi_0 M$ 
  is finitely generated as a module over $\pi_0 A$. The claim then 
  follows from
  \[
    \pi_0 M=\pi_0 A \otimes_{\pi_0 A} \pi_0 M = \pi_0 (\pi_0 A \otimes 
    _A M).
  \]
  Now let $n > 0$. Recall that given a map of $A$--modules $\phi \colon 
  A^k \to M$ which induces a surjection $\pi_0 A ^k \to \pi_0 M$, then 
  $M$ is perfect to order $n$ if and only if $\fib (\phi)$ is perfect to 
  order $n-1$ \cite[Proposition 2.6.12]{DAGVIII}. The argument now 
  follows \cite[Proposition 2.6.13]{DAGVIII}. As $\pi_0 M$ is finitely 
  generated, we can choose a fiber sequence of connective $A$--modules
  \[
    M' \longrightarrow A^k \longrightarrow M.
  \]
  Tensoring with $\pi_0 A$, we obtain a fiber sequence of $\pi_0 
  A$--modules
  \[
    \pi_0 A \otimes _A M' \longrightarrow \pi_0 A ^k \longrightarrow 
    \pi_0 A \otimes _A M.
  \]
  By assumption, $\pi_0 A \otimes_A M$ is perfect to order $n$ as $\pi_0 
  A$--module, so $\pi_0 A \otimes _A M'$ is perfect to order $n-1$ as 
  $\pi_0 A$--module.  By the inductive hypothesis $M'$ is perfect to 
  order $n-1$ as $A$--module, and thus $M$ is perfect to order $n$ as 
  $A$--module.
\end{proof}

\begin{cor}
  Let $A$ be a connective $\IE _{\infty}$--ring and $M$ a connective 
  $A$--module. If $ \pi_0 A \otimes _{A} M $ is almost perfect and of 
  finite Tor--amplitude as $\pi_0 A$--module, then $M$ is perfect as 
  $A$--module.
\end{cor}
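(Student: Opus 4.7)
The plan is to reduce the claim to the two preceding lemmas together with the standard characterization that an $A$-module is perfect if and only if it is almost perfect and of finite Tor-amplitude (see \cite[Proposition 2.9.1.2]{HigherAlgebra} or \cite[Proposition 2.7.28]{DAGVIII}). So the work has already been done; the corollary is just the combination of the two reductions.

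First I would note that by assumption $\pi_0 A \otimes_A M$ has Tor-amplitude $\leq n$ for some finite $n$ as a $\pi_0 A$-module. By \fullref{lem:toramp} this immediately implies that $M$ itself has Tor-amplitude $\leq n$ as an $A$-module. Next, by assumption $\pi_0 A \otimes_A M$ is almost perfect as a $\pi_0 A$-module, meaning it is perfect to order $n$ for every integer $n$. Applying \fullref{lem:almperf} for each $n \geq 0$ shows that $M$ is perfect to order $n$ as an $A$-module for every $n$, i.e.\ $M$ is almost perfect as an $A$-module.

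Finally, invoking the standard result that a connective $A$-module which is both almost perfect and of finite Tor-amplitude is perfect, we conclude that $M$ is perfect as an $A$-module. There is no real obstacle here — the two lemmas have been specifically set up to transfer the relevant finiteness properties from $\pi_0 A \otimes_A M$ back to $M$, and the only external input is the perfectness criterion, which is part of the standard foundational material in Lurie's treatment.
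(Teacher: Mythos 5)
Your proof is correct and follows exactly the paper's own argument: apply \fullref{lem:toramp} to get finite Tor--amplitude of $M$, apply \fullref{lem:almperf} for each $n$ to get that $M$ is almost perfect, and then invoke the standard fact that a connective module which is almost perfect and of finite Tor--amplitude is perfect. No gaps.
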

\begin{proof}
  By \fullref{lem:toramp}, $M$ is of finite Tor--amplitude. By 
  \fullref{lem:almperf}, $M$ is perfect to order $n$ for all $n$ and 
  thus almost perfect. Now $M$ being almost perfect and of finite 
  Tor--amplitude imply that $M$ is perfect.
\end{proof}

We can now prove the finiteness result alluded to above.

\begin{prop}
  \label{prop:finpres}
  Let $k$ be a connective $\IE_{\infty}$--ring, and let $\CAlg_k=\CAlg 
  (\Mod_k(\Sp))$. Let $A$ be a discrete object of $\CAlg_k$ such that 
  $A$ is finitely presented as $\pi_0 k$--algebra. Assume that $A$ is 
  equipped with an $n$--connective perfect $m$--obstruction theory $\phi 
  \colon E \to L_{A/k}$. Then in any pair $(f \colon B \to 
  A,\widetilde{\delta} \colon L_{A/B} \to K)$ inducing the obstruction 
  theory, the object $B$ of $\CAlg_k$ is locally of finite presentation 
  and $L_{B/k}$ is of Tor--amplitude $\leq m$.
\end{prop}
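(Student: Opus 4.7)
The plan is to bootstrap from the equivalence $\widetilde{\phi}\colon E \simeq A \otimes_B L_{B/k}$ supplied by \fullref{rem:assphi}, combine it with the finiteness results proved just before the proposition, and then invoke Lurie's characterization of locally finitely presented $\IE_{\infty}$-algebras in \cite[Theorem 8.4.3.17]{HigherAlgebra}.

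First I would observe that $E$ is connective, perfect, and lies in $(\Mod_A)_{\leq m}$, so $E$ has Tor-amplitude $\leq m$ as an $A$-module. The inducing condition forces $\trunc{n-1}{f}$ to be an equivalence, and since $n \geq 1$ this gives $\pi_0 B \cong \pi_0 A$. Base-changing $\widetilde{\phi}$ along $A \to \pi_0 A$ therefore yields
\[
  \pi_0 B \otimes_B L_{B/k} \;\simeq\; \pi_0 A \otimes_A E,
\]
which is perfect and of Tor-amplitude $\leq m$ as a $\pi_0 B$-module (perfectness and Tor-amplitude are both preserved by base change). Applying \fullref{lem:toramp} with $M = L_{B/k}$ transfers the Tor-amplitude bound $\leq m$ to $L_{B/k}$ over $B$, and the corollary following \fullref{lem:almperf} (almost perfect together with finite Tor-amplitude implies perfect) shows that $L_{B/k}$ is perfect as a $B$-module.

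It then remains to upgrade these properties of $L_{B/k}$ to the statement that $B$ itself is compact in $\CAlg_k$. Here I would invoke \cite[Theorem 8.4.3.17]{HigherAlgebra}, which characterizes being locally of finite presentation over $k$ as the conjunction of being almost of finite presentation over $k$ and having a perfect relative cotangent complex. Perfectness of $L_{B/k}$ is now in hand; for almost finite presentation, it suffices that $\pi_0 B \cong \pi_0 A$ be finitely presented over $\pi_0 k$, which is part of the hypothesis, and that $L_{B/k}$ be almost perfect, which is automatic from perfectness.

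The only non-routine step is the descent of perfectness and Tor-amplitude from the base-changed module $\pi_0 A \otimes_A E$ back to $L_{B/k}$ over $B$, since one has no direct a priori control over $L_{B/k}$ — in particular, $B$ is only implicitly described as the inverse limit of square-zero extensions built in \fullref{thm:algresult}. Fortunately this is exactly what the lemmas \fullref{lem:toramp}, \fullref{lem:almperf}, and their corollary are engineered to deliver, so the argument reduces to assembling these ingredients.
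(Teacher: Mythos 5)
Your proposal follows essentially the same route as the paper's own proof: both start from the equivalence $E \simeq \pi_0 B \otimes_B L_{B/k}$ supplied by \fullref{rem:assphi} (using $\pi_0 B \cong A$ from the inducing condition), transfer perfectness and Tor--amplitude $\leq m$ from this base change back to $L_{B/k}$ via \fullref{lem:toramp}, \fullref{lem:almperf} and their corollary, and then conclude from $\pi_0 B$ being finitely presented over $\pi_0 k$ together with perfectness of $L_{B/k}$ that $B$ is locally of finite presentation by \cite[Theorem 8.4.3.17]{HigherAlgebra}. The only difference is cosmetic: you unwind that theorem through the intermediate notion of almost finite presentation, which the paper cites in one step.
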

\begin{proof}
  Using the equivalence $\widetilde{\phi}\colon E \to A \otimes _B 
  L_{B/k} = \pi_0 B \otimes _B L_{B/k}$ obtained from 
  $\widetilde{\delta}$ (see \fullref{rem:assphi}) it follows that $\pi_0 
  B \otimes _B L_{B/k}$ is perfect and of Tor--amplitude $\leq m$.  In 
  particular, $\pi_0 B \otimes _B L_{B/k}$ is almost perfect and of 
  finite Tor--amplitude $\leq m$. By the previous corollary, $L_{A/k}$ 
  is perfect and of Tor--amplitude $\leq m$. Now $\pi_0 B $ being of 
  finite presentation over $\pi_0 k$ and $L_{B/k}$ being perfect imply 
  that $B$ is locally of finite presentation over $k$ \cite[Theorem  
  8.4.3.17]{HigherAlgebra}.
\end{proof}

With certain finiteness conditions imposed, it is possible to define 
obstruction classes that measure if an obstruction theory lifts to the  
inductively defined square-zero extensions. For simplicity, we restrict 
to the situation described in \fullref{exmp:overk}. Here we have 
classifying stack for perfect complexes available (see \cite{tvaq, 
  pridham}) such that map induced on the tangent complexes by the 
classifying map is given by the Atiyah class.

Applying the formalism of Atiyah classes as in \cite{ht}, we can show 
that the complex defining an $n$--connective $n$--perfect obstruction 
theory always lifts to the corresponding square-zero extension.

\begin{lem}
  Assume we are in the situation of \fullref{exmp:overk}, and let $A \in 
  \CAlg(\Cat{C})$.  Let $\phi \colon E \to L_A$ be an $n$--connective 
  $n$--perfect obstruction theory. Let $\eta \colon L_A \to 
  \cofib(\phi)$ be the corresponding derivation and $A^{\eta} \to A$ the 
  corresponding square-zero extension. Then $E$ lifts to $A^{\eta}$.
\end{lem}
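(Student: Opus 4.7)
The plan is to produce both a perfect $A^{\eta}$--module $E^{\eta}$ lifting $E$ and a compatible $A^{\eta}$--linear map $\phi^{\eta} \colon E^{\eta} \to L_{A^{\eta}}$ extending $\phi$, so that the definition of lifting an obstruction theory is satisfied. Following the strategy announced before the lemma, I identify both obstructions via Atiyah classes with respect to the classifying stack $\Perf$ of \cite{tvaq, pridham}, and show that both vanish by connectivity estimates coming from the hypothesis that $E$ is perfect and lies in $(\Mod_A)_{\leq n}$.

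For the first step, $E$ is classified by a morphism $c_E \colon \Spec A \to \Perf$, and lifting $E$ to a perfect $A^{\eta}$--module is equivalent to extending $c_E$ along $\Spec A \hookrightarrow \Spec A^{\eta}$. By the Atiyah-class formalism of \cite{ht}, the obstruction is the composition
\[
  \ob(E,\eta) = (\id_E \otimes \eta[1]) \circ \alpha(E) \in \pi_0 \Map_A(E, E \otimes_A K[1]),
\]
where $\alpha(E) \colon E \to E \otimes_A L_A[1]$ is the Atiyah class of $E$. Dualizability of $E$ identifies the mapping complex with $E^{\vee} \otimes_A E \otimes_A K[1]$. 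Being connective, perfect, and bounded above by $n$, the complex $E$ has Tor--amplitude $[0,n]$, so $E^{\vee}$ has Tor--amplitude $[-n,0]$; combined with $K[1] \in \Cat{C}_{\geq n+2}$, the triple tensor lies in $\Cat{C}_{\geq 2}$. Therefore $\pi_0 = 0$, the obstruction vanishes, and a lift $E^{\eta}$ with $A \otimes_{A^{\eta}} E^{\eta} \simeq E$ exists.

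Next, the $A$--linear map $\phi' \colon E \to A \otimes_{A^{\eta}} L_{A^{\eta}}$ produced by \eqref{eq:FundDiag} must be promoted to an $A^{\eta}$--linear $\phi^{\eta} \colon E^{\eta} \to L_{A^{\eta}}$. Using $\fib(A^{\eta} \to A) \simeq K[-1]$ one obtains the cofiber sequence of $A^{\eta}$--modules
\[
  K[-1] \otimes_A \bigl(A \otimes_{A^{\eta}} L_{A^{\eta}}\bigr) \to L_{A^{\eta}} \to A \otimes_{A^{\eta}} L_{A^{\eta}}.
\]
Applying $\Map_{A^{\eta}}(E^{\eta}, -)$ and using $A \otimes_{A^{\eta}} E^{\eta} \simeq E$ places the obstruction to lifting $\phi'$ in $\pi_{-1}\Map_A(E, K[-1] \otimes_A (A \otimes_{A^{\eta}} L_{A^{\eta}}))$. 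The same Tor--amplitude bound on $E^{\vee}$, together with $K[-1] \in \Cat{C}_{\geq n}$ and the connectivity of $A \otimes_{A^{\eta}} L_{A^{\eta}}$, places this mapping complex in $\Cat{C}_{\geq 0}$, so the $\pi_{-1}$ vanishes and $\phi^{\eta}$ exists.

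The main obstacle is the first step: transferring the Huybrechts--Thomas Atiyah-class obstruction formalism of \cite{ht} to the derived $\IE_{\infty}$--$k$--algebra setting of \fullref{exmp:overk}, so that the obstruction to lifting a perfect complex along a square-zero extension is correctly identified as $(\id \otimes \eta[1]) \circ \alpha(E)$. Once this identification is in place, the $n$--perfectness and $n$--connectivity hypotheses force the relevant target mapping complexes to be highly connective, and both vanishings reduce to the Tor--amplitude bookkeeping above.
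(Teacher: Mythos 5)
Your first step follows the same route as the paper: the obstruction to deforming the perfect complex $E$ along $A^{\eta}\to A$ is the Atiyah class of $E$ paired with $\eta$, an element of $\pi_0\Map_A(E,E\otimes_A K[1])\simeq\pi_0\bigl(\End(E)\otimes_A K[1]\bigr)$, which one wants to kill by a connectivity estimate coming from $n$--perfectness. But your justification of that estimate is wrong: being connective, perfect and in $(\Mod_A)_{\leq n}$ does \emph{not} imply Tor--amplitude $[0,n]$. For instance $E=k$ over $A=k[x]$ is perfect, connective and discrete but has Tor--amplitude $[0,1]$, and Koszul complexes over polynomial rings give discrete perfect modules of arbitrarily large Tor--amplitude; in the paper an ``$m$--obstruction theory'' only bounds the truncation of $E$, and Tor--amplitude bounds appear only as a \emph{conclusion} (\fullref{prop:finpres}), never as a hypothesis. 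The paper's own proof does not use your inference: it observes that $(\End(E)[1])^{\vee}$ is $(n-1)$--truncated --- which holds because $E^{\vee}=\Map_A(E,A)$ automatically has Tor--amplitude $\leq 0$ for connective dualizable $E$, so $\End(E)\simeq E^{\vee}\otimes_A E$ is $n$--truncated --- and plays this truncatedness off against the $(n+1)$--connectivity of $K$. Your version instead needs $E^{\vee}$ to be $(-n)$--connective, which is exactly what the truncation hypothesis fails to provide.

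The second step is the more serious problem: the lemma only asserts that the \emph{complex} $E$ lifts to $A^{\eta}$; it does not assert that $\phi'$ lifts to a morphism $\phi^{\eta}\colon E^{\eta}\to L_{A^{\eta}}$. That question is the subject of the paper's \emph{next} lemma, where it is governed by an obstruction class in $\Ext^{0}_{A^{\eta}}(E^{\eta},K\otimes_{A^{\eta}}L_{A^{\eta}})$ --- precisely the group your $\pi_{-1}$ computes, since $\pi_{-1}\Map_A\bigl(E,K[-1]\otimes_{A^{\eta}}L_{A^{\eta}}\bigr)\simeq\Ext^{0}_{A^{\eta}}(E^{\eta},K\otimes_{A^{\eta}}L_{A^{\eta}})$ --- and the paper emphatically does not claim this class vanishes: the vanishing of these inductively defined classes is the substantive condition in \fullref{cor:Atiyah}, and the acknowledgements explicitly retract the earlier claim that every obstruction theory is induced by a derived structure. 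Your argument that this class always dies again rests on the unjustified Tor--amplitude bound; with only $E\in(\Mod_A)_{\leq n}$, the spectrum $E^{\vee}\otimes_A K\otimes_{A^{\eta}}L_{A^{\eta}}$ need not be connective, so the relevant homotopy group need not vanish. You should delete the second step entirely (it proves more than the lemma states and would trivialize the paper's main obstruction calculus), and repair the first step by replacing the Tor--amplitude claim with the truncatedness of $\End(E)$ that the paper uses.
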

\begin{proof}
  The obstruction to lifting $E$ is given by the composition
  \[
    (\End(E)[1])^{\vee} \to L_A \to K
  \]
  Now since $K$ is $(n+1)$--connective and $(\End(E)[1])^{\vee}$ is 
  $(n-1)$--truncated, this morphism has to be homotopical to zero by the 
  $t$--structure.
\end{proof}

Under the above assumptions, denote the lifted complex by $E^{\eta}$.  
To lift an obstruction theory, the remaining problem thus is to lift the 
morphism $\phi' \colon E \to A \otimes_{A^{\eta}} L_{A^{\eta}}$ of 
\eqref{eq:FundDiag} to a morphism $\phi^{\eta} \colon E^{\eta} \to 
L_{A^{\eta}}$.  Whether or not this is possible can be measured by a 
certain obstruction class.

\begin{lem}
  Let $\phi \colon E \to L_A$ be an $n$--connective $n$--perfect 
  obstruction theory. Let $\eta \colon L_A \to K=\cofib(\phi)$ be the 
  corresponding derivation and $A^{\eta} \to A$ the corresponding 
  square-zero extension. Let $E^{\eta}$ be a perfect complex on 
  $A^{\eta}$ such that $A \otimes_{A^{\eta}} E^{\eta} \simeq E$. Then 
  $\phi'$ lifts to a morphism $\phi^{\eta} \colon E^{\eta} \to 
  L_{A^{\eta}}$ if and only if the class of the morphism
  \[
    E^{\eta} \to K \otimes_{A^{\eta}}  L_{A^{\eta}}
  \]
  in $\Ext^0_{A^{\eta}}(E^{\eta},K \otimes_{A^{\eta}}  L_{A^{\eta}})$ 
  defined in the proof vanishes.
\end{lem}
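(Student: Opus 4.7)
The plan is to construct the obstruction class as the connecting homomorphism applied to $\phi'$ in the long exact sequence of a suitable fiber sequence of mapping spectra. First, I would observe that the square-zero extension $f \colon A^{\eta} \to A$ has $\fib(f) \simeq K[-1]$ as an $A^{\eta}$-module, since the derivation has target $K = M[1]$ with fiber $M$. Tensoring the resulting fiber sequence $K[-1] \to A^{\eta} \to A$ with $L_{A^{\eta}}$ over $A^{\eta}$ (exact in a stable $\infty$-category) yields the fiber sequence of $A^{\eta}$-modules
\[
K[-1] \otimes_{A^{\eta}} L_{A^{\eta}} \longrightarrow L_{A^{\eta}} \longrightarrow A \otimes_{A^{\eta}} L_{A^{\eta}}.
\]
Applying the exact functor $\Map_{A^{\eta}}(E^{\eta}, -)$ and passing to the long exact sequence of homotopy groups produces a connecting homomorphism
\[
\partial \colon \pi_0 \Map_{A^{\eta}}(E^{\eta}, A \otimes_{A^{\eta}} L_{A^{\eta}}) \longrightarrow \Ext^0_{A^{\eta}}(E^{\eta}, K \otimes_{A^{\eta}} L_{A^{\eta}}).
\]

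Next, using the equivalence $A \otimes_{A^{\eta}} E^{\eta} \simeq E$ and the extension--restriction adjunction along $A^{\eta} \to A$, I would regard the morphism $\phi' \colon E \to A \otimes_{A^{\eta}} L_{A^{\eta}}$ as an element of $\pi_0 \Map_{A^{\eta}}(E^{\eta}, A \otimes_{A^{\eta}} L_{A^{\eta}})$. I then define the obstruction class to be $\partial(\phi')$, which is represented concretely by the composition
\[
E^{\eta} \longrightarrow A \otimes_{A^{\eta}} E^{\eta} \simeq E \overset{\phi'}{\longrightarrow} A \otimes_{A^{\eta}} L_{A^{\eta}} \longrightarrow K \otimes_{A^{\eta}} L_{A^{\eta}},
\]
the last arrow being the boundary map of the fiber sequence above. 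By exactness of the long exact sequence, $\phi'$ lies in the image of $\pi_0 \Map_{A^{\eta}}(E^{\eta}, L_{A^{\eta}}) \to \pi_0 \Map_{A^{\eta}}(E^{\eta}, A \otimes_{A^{\eta}} L_{A^{\eta}})$ if and only if $\partial(\phi') = 0$, which is the claimed equivalence.

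The main subtlety I expect is to check that a lift $\phi^{\eta} \colon E^{\eta} \to L_{A^{\eta}}$ as an $A^{\eta}$-linear morphism really does recover $\phi'$ after extending scalars to $A$, since the statement requires compatibility with the commutative diagram in the definition of ``lifts to $A^{\eta}$''. This is essentially an unwinding of the adjunction and uses only that $A \otimes_{A^{\eta}} L_{A^{\eta}}$ sits in the middle of our fiber sequence. A secondary point is to reconcile the statement's description of the obstruction as a specific morphism $E^{\eta} \to K \otimes_{A^{\eta}} L_{A^{\eta}}$ with its description as a value of the connecting homomorphism; the two agree by the standard identification of the boundary map in a fiber sequence of mapping spectra with post-composition by the shifted boundary of the underlying fiber sequence of modules.
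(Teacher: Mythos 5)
Your argument is essentially the paper's: both tensor the (co)fiber sequence $A^{\eta} \to A \to K$ of the square-zero extension with $L_{A^{\eta}}$ and identify the obstruction to lifting $\phi'$ along $L_{A^{\eta}} \to A \otimes_{A^{\eta}} L_{A^{\eta}}$ with the composite $E^{\eta} \to A \otimes_{A^{\eta}} E^{\eta} \simeq E \to A \otimes_{A^{\eta}} L_{A^{\eta}} \to K \otimes_{A^{\eta}} L_{A^{\eta}}$, you merely making explicit (via the long exact sequence of the mapping spectrum $\Map_{A^{\eta}}(E^{\eta},-)$) the exactness step the paper leaves implicit. This is correct and matches the paper's proof.
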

\begin{proof}
  Since $f \colon A^{\eta} \to A$ is a square-zero extension defined by 
  $\eta \colon L_A \to K$, we have $\cofib(f)=K$. Tensoring the 
  resulting cofiber sequence $A^{\eta} \to A \to K$ with $E^{\eta}$ and 
  $L_{A^{\eta}}$ respectively, we obtain a diagram of cofiber sequences
  \[
    \xymatrix{
      E^{\eta} \ar[r] & A \otimes_{A^{\eta}} E^{\eta} \ar[r] 
      \ar[d]_{\phi'} & K \otimes_{A^{\eta}} E^{\eta} \\
       L_{A^{\eta}}\ar[r] & A \otimes_{A^{\eta}}  L_{A^{\eta}}\ar[r] & K 
       \otimes_{A^{\eta}}  L_{A^{\eta}}\\
    }
  \]
  Thus $\phi'$ lifts if and only if the class of the composition
  \[
    E^{\eta} \to A \otimes_{A^{\eta}} E^{\eta} \to A \otimes_{A^{\eta}}  
    L_{A^{\eta}} \to K \otimes_{A^{\eta}}  L_{A^{\eta}}
  \]
  in $\Ext_{A^{\eta}}^{0}(E^{\eta}, K \otimes_{A^{\eta}}  L_{A^{\eta}})$ 
  vanishes.
\end{proof}

By combining the previous lemma with \fullref{thm:algresult}, we obtain 
the following Corollary.

\begin{cor}
  \label{cor:Atiyah}
  Let $\Cat{C}$ be an $\infty$--category as in \fullref{exmp:overk}, and 
  let $A \in \CAlg(\Cat{C})$.  Assume that $(A, \phi \colon E \to L_A)$ 
  is an $n$--connective $n$--perfect obstruction theory with $n \geq 1$, 
  and let $\cofib(\phi)=K$.
  
  Then a pair
  \[
    \left( f \colon B \to A, \widetilde{\delta} \colon K \to L_{A/B} 
    \right)
  \]
  inducing the obstruction theory exists if and only if the inductively 
  defined obstruction classes
  in $\Ext^0_{A_n}(E_n,K_{n-1} \otimes_{A_n}  L_{A_n})$
  vanish.
\end{cor}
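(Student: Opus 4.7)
The plan is to combine \fullref{thm:algresult} with the two lemmas immediately preceding this corollary. By \fullref{thm:algresult}, the existence of a pair $(f, \widetilde{\delta})$ inducing the obstruction theory is equivalent to the existence of an inductive system of lifts $\phi_m \colon E_m \to L_{A_m}$ of $\phi$. It therefore suffices to show that the vanishing of the stated Ext classes at each stage is equivalent to the existence of such an inductive system.

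At the inductive step, suppose $\phi_m \colon E_m \to L_{A_m}$ has been constructed. Lifting it to $A_{m+1} = A_m^{\eta_m}$ breaks into two sub-problems: (i) producing a perfect complex $E_{m+1}$ on $A_{m+1}$ with $A_m \otimes_{A_{m+1}} E_{m+1} \simeq E_m$, and (ii) producing a morphism $\phi_{m+1} \colon E_{m+1} \to L_{A_{m+1}}$ refining the morphism $\phi'_m \colon E_m \to A_m \otimes_{A_{m+1}} L_{A_{m+1}}$ from diagram \eqref{eq:FundDiag}. The first of the two preceding lemmas disposes of (i) unconditionally: the Atiyah-class obstruction lies in a Hom group between the highly connective $K_m$ and the suitably truncated $(\End(E_m)[1])^\vee$, and therefore vanishes for $t$-structural reasons. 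Given a choice of $E_{m+1}$, the second lemma identifies the obstruction to (ii) as precisely the class in $\Ext^0_{A_{m+1}}(E_{m+1}, K_m \otimes_{A_{m+1}} L_{A_{m+1}})$ appearing in the statement of the corollary.

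The main bookkeeping task will be to check that the hypotheses of the two preceding lemmas persist through the induction, so that the alternating application of ``always lift the complex, then measure the obstruction to lifting the morphism'' can be iterated indefinitely. By \fullref{lem:KConnect}, the connectivity of $K_m$ essentially doubles at each stage, while the amplitude of the perfect complex $E_m$ stays bounded (perfectness and $m$-bounded amplitude are preserved when lifting through a square-zero extension). Consequently $K_m$ remains far more connective than $(\End(E_m)[1])^\vee$ for all $m$, validating the first lemma at every step, and the ``$n_m$-connective $n_m$-perfect'' hypothesis of the second lemma continues to hold with $n_m$ growing. Once this verification is in place, the existence of an inductive system of lifts is equivalent to the vanishing of the stated Ext class at each stage, and combining with \fullref{thm:algresult} yields the corollary.
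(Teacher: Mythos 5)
Your proposal is correct and follows essentially the same route as the paper, which obtains the corollary precisely by combining \fullref{thm:algresult} with the two preceding lemmas (unconditional lifting of the perfect complex $E_m$ via the Atiyah-class/$t$-structure argument, and the $\Ext^0$ class measuring the lift of the morphism). Your explicit bookkeeping that the connectivity of $K_m$ grows while the amplitude of $E_m$ stays bounded is exactly the point the paper leaves implicit.
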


%%%%%%%%%%%%%%%%%%%%%%
\section{The Geometric Case}
%%%%%%%%%%%%%%%%%%%%%%

In this section we want to apply the above results in the setting of 
spectral Deligne--Mumford stacks. We first recall some of the definitions 
we will use.

\subsection{Background}

In \cite{DAGVII} Lurie defines a $\infty$--category $\Sch 
(\Sh{G}^{\Sp}_{\et})$ of connective spectral Deligne--Mumford stacks. An 
object $\dS{X}$ of this category is a pair $\iT{X}$ consisting of an 
$\infty$--topos $\Sh{X}$ and a sheaf of $\IE_{\infty}$--rings satisfying 
further conditions. There also is a relative version $\Sch 
(\Sh{G}^{\Sp}_{\et}(k))$ of connective spectral Deligne--Mumford stacks 
over a connective $\IE_{\infty}$--ring $k$. Here the structure sheaf 
takes its values in the category $\CAlg_k$ of $\IE_{\infty}$--rings over 
$k$. This category can in fact be identified with the category of 
$\Sh{G}^{\Sp}_{\et}$--schemes equipped with a morphism to $\Spec 
(k)$.\footnote{In Lurie's notation, $\Spec k$ would be $\Spec^{\et} 
  (k)$. As we will only encounter this $\Spec$-functor omitting the 
  superscript hopefully does not lead to confusion.}

A key property of this category is that ordinary Deligne--Mumford stacks 
over the discrete commutative ring $\pi_0 k$ sit inside $\Sch 
(\Sh{G}^{\Sp}_{\et}(k))$ as the full subcategory spanned by the 
0--truncated and 1--localic $\Sh{G}^{\Sp}_{\et}(k)$--schemes.

The theory of spectral connective Deligne--Mumford stacks over $k$ is 
compatible with $n$--truncations in the sense that for every such stack 
$\dS{X}$ its $n$--truncation $\trunc{n}{\dS{X}}=(\Sh{X}, 
\trunc{n}{\Oof{X}})$ is again a spectral connective Deligne--Mumford 
stack over $k$. In particular, given a 1--localic connective spectral 
Deligne--Mumford stack, its 0--truncation 
$\trunc{0}{\dS{X}}=(\Sh{X},\pi_0 \Oof{X})$ is an ordinary 
Deligne--Mumford stack over $\pi_0 k$ with the same underlying 
$\infty$--topos as $\dS{X}$.  Furthermore, we have a canonical morphism 
$\trunc{0}{\dS{X}} \to \dS{X}$.

Finally, recall that given a connective spectral Deligne--Mumford stack 
$\dS{X}$ over $\Spec k$ we say that $\dS{X}$ is \emph{locally of finite 
  presentation over $\Spec k$} if it is possible to choose a covering by 
affine schemes $\Spec (A_{\alpha})$ such that each $A_{\alpha}$ is 
locally of finite presentation over $k$, i.e., a compact object of 
$\CAlg_k$. If a connective spectral Deligne--Mumford stack $\dS{X}$ is 
locally of finite presentation over $k$ and has a cotangent complex of 
Tor--amplitude $\leq 1$ we say that $\dS{X}$ is \emph{quasi-smooth}.

In the following we will make use of the following identification. Let 
$\Sh{X}$ be an $\infty$--topos, and take $\Cat{C}$ to be the 
$\infty$--category of sheaves of spectra $\Shv _{\Sp} (\Sh{X})$ on 
$\Cat{X}$.  This is a symmetric monoidal $\infty$--category equipped 
with a $t$--structure satisfying \fullref{ass:t}. Using the equivalence
\begin{equation}
  \label{eq:calg-object}
  \Shv_{\CAlg} (\Sh{X}) \simeq \CAlg(\Shv_{\Sp}(\Sh{X}))
\end{equation}
we can identify the structure sheaf $\Oof{X}$ of a connective spectral 
Deligne--Mumford stack with a commutative algebra object in 
$\Shv_{\Sp}(\Sh{X})$.

\subsection{The Construction}

We first give the definition of an $n$--connective obstruction theory in 
the geometric setting. The only difference to the algebraic case is that 
we want to assume the module defining the obstruction theory to be  
quasi-coherent.

\begin{defn}
  \label{defn:qcpot}
  Let $\dS{X}=\iT{X}$ be connective spectral Deligne--Mumford stack, and 
  let $n \geq 1$.  A $n$--connective quasi-coherent obstruction theory 
  for $\dS{X}$ is a morphism
  \[
    \phi \colon \Sh{E} \longrightarrow L_{\Oof{X}}
  \]
  of connective quasi-coherent $\Oof{X}$--modules such that $\cofib{\phi} 
  \in \QCoh(\dS{X})_{\geq n+1}$.
\end{defn}

We also have the analogous definition in the relative setting over 
$\Spec (k)$ using the relative cotangent complex.

\begin{defn}
  Let $k$ be a connective $\IE_{\infty}$--ring, and let $\dS{X}$ be a 
  connective spectral Deligne--Mumford stack over $\Spec (k)$. Let $n 
  \geq 1$.
  \begin{enumerate}[(i)]
    \item A $n$--connective quasi-coherent obstruction theory for 
      $\dS{X}$ over $\Spec (k)$ is a morphism
      \[
        \phi \colon \Sh{E} \longrightarrow L_{\Oof{X}/k}
      \]
      of quasi-coherent $\Oof{X}$--modules such that  $\cofib{\phi} \in 
      \QCoh(\dS{X})_{\geq n+1}$.
    \item Let $\dS{X}$ be locally of finite presentation over $\Spec 
      (k)$, and let $\phi \colon \Sh{E} \to L_{\Oof{X}/k}$ be a 
      $n$--connective obstruction theory. We say that the obstruction 
      theory is \emph{perfect} if $\Sh{E}$ is perfect. The obstruction 
      theory is an $m$--obstruction theory if $\Sh{E} \in \QCoh 
      (\dS{X})_{\geq 0} \cap \QCoh (\dS{X})_{\leq m}$.
  \end{enumerate}
\end{defn}

In complete analogy to \fullref{defn:induce} we have the notion of an 
object inducing the obstruction theory.

\begin{defn}
  Let $\dS{X}_0=(\Sh{X},\Sh{O}_{\Sh{X}_0})$ be a spectral connective 
  Deligne--Mumford stack equipped with an $n$--connective quasi-coherent 
  obstruction theory $\phi \colon \Sh{E} \to L_{\Sh{O}_{\Sh{X}_0}}$ with 
  $n \geq 1$. Let $\cofib(\phi)=\Sh{K}$, and let $\delta \colon \Sh{K} 
  \to L_{\Sh{O}_{\Sh{X}_0}}$ be the induced morphism. We say that the 
  pair
  \[
    \left(i \colon \dS{X}_0 \to \dS{X}, \widetilde{\delta} \colon \Sh{K} 
      \to L_{\dS{X}_{0}/\dS{X}}\right)
  \]
  induces the obstruction theory if
  \begin{enumerate}[(i)]
    \item $\dS{X}=\iT{X}$ is a connective spectral Deligne--Mumford stack 
      with the same underlying $\infty$--topos as $\dS{X}_0$.
    \item $\trunc{n}{\dS{X}}= \trunc{n}{\dS{X}_0}$.
    \item $\widetilde{\delta} \colon \Sh{K} \to L_{\dS{X}_{0}/\dS{X}}$ 
      is an equivalence of quasi-coherent $\Sh{O}_{\Sh{X}_0}$--modules 
      such that
      \[
        \xymatrix{
          L_{\Sh{O}_{\Sh{X}_0}} \ar[r]^{\delta} \ar[d]_d & \Sh{K} 
          \ar[dl]^{\widetilde{\delta}} \\
          L_{\Sh{O}_{\Sh{X}_0}/\Oof{X}}
        }
      \]
      commutes in $\QCoh (\dS{X}_0)$.
  \end{enumerate}
\end{defn}

We can now begin to prove geometric versions of our main result.  Since 
we want the objects inducing the obstruction theories to be of geometric 
nature, we have to make sure that in every step of the reconstruction we 
obtain geometric objects. The key to this is verifying that a 
square-zero extension of a connective spectral Deligne--Mumford stack by 
a quasi-coherent sheaf is again a spectral connective Deligne--Mumford 
stack.

\begin{lem}
  \label{lem:smallext}
  Let $n \geq 1$, and let $\dS{X}=\iT{X}$ be a connective spectral 
  Deligne--Mumford stack. Furthermore, let $\eta \colon L_{\Oof{X}} \to 
  \Sh{M}[1]$ be an $n$--connective derivation with $\Sh{M}$ a 
  quasi-coherent $\Oof{X}$--module.  Let $\Oof{X}^\eta \to \Oof{X}$ be 
  the corresponding extension under \fullref{thm:extension}. Then 
  $\dS{X}'=(\Sh{X},\Oof{X}^{\eta})$ is a connective spectral 
  Deligne--Mumford stack.
\end{lem}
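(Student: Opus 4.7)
The plan is to verify the defining properties of a connective spectral Deligne--Mumford stack for the pair $(\Sh{X},\Oof{X}^\eta)$ by working étale-locally and invoking the affine case. Observe first that since $\eta\colon L_{\Oof{X}}\to\Sh{M}[1]$ is $n$--connective with $n\geq 1$ and the fiber sequence $\Sh{M}\to\Oof{X}^\eta\to\Oof{X}$ has $\Sh{M}\in\QCoh(\dS{X})_{\geq 1}$, the map $\Oof{X}^\eta\to\Oof{X}$ induces an isomorphism on $\pi_0$. Consequently the small étale site of the ringed topos $(\Sh{X},\Oof{X}^\eta)$ agrees with that of $(\Sh{X},\Oof{X})$, because étale morphisms of connective $\IE_\infty$--rings lift uniquely along nilpotent thickenings (this is the classical invariance of the étale site under square-zero extensions, which Lurie proves in the spectral setting).

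Using this, I would reduce to the affine case. Choose an étale cover $\{\Spec A_\alpha\to\dS{X}\}$ by affine spectral schemes. Restricting the global derivation $\eta$ yields, for each $\alpha$, an $n$--connective derivation $\eta_\alpha\colon L_{A_\alpha}\to M_\alpha[1]$ of the $\IE_\infty$--ring $A_\alpha$, where $M_\alpha$ is the $A_\alpha$--module corresponding to $\Sh{M}|_{\Spec A_\alpha}$ under the equivalence $\QCoh(\Spec A_\alpha)\simeq\Mod_{A_\alpha}$. The square-zero extension functor $\Phi$ of Theorem~\ref{thm:extension} is compatible with this localization since $\Phi$ is defined as a right adjoint and therefore commutes with restriction to the étale site. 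Thus the restriction of $\Oof{X}^\eta$ to $\Spec A_\alpha$ is equivalent, as a sheaf of $\IE_\infty$--rings, to the structure sheaf of the affine spectral scheme $\Spec(A_\alpha^{\eta_\alpha})$.

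With the affine description in hand, the verification is essentially bookkeeping. The pair $(\Sh{X},\Oof{X}^\eta)$ is locally equivalent to the affine spectral schemes $\Spec(A_\alpha^{\eta_\alpha})$, so it is a $\Sh{G}^{\Sp}_{\et}$--scheme. Connectivity of $\Oof{X}^\eta$ follows from the fiber sequence $\Sh{M}\to\Oof{X}^\eta\to\Oof{X}$ together with connectivity of both $\Sh{M}$ and $\Oof{X}$. The sheaf $\Oof{X}^\eta$ is strictly Henselian because strict Henselianness is a property of $\pi_0$, which is unchanged, and the local property on higher homotopy sheaves follows from the connectivity of $\Sh{M}$.

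The main obstacle I anticipate is the functoriality/localization compatibility in the second step: one must check carefully that the global square-zero extension produced by the adjunction $\Phi\dashv\Psi$ on sheaves restricts on each affine chart to the extension produced by the corresponding adjunction on modules over $A_\alpha$. This is really a statement about how the cotangent complex and the functor $\Phi$ interact with the symmetric monoidal $\infty$--category $\Shv_{\Sp}(\Sh{X})$ versus its local pieces $\Mod_{A_\alpha}$, and follows from the fact that both $L$ and $\Phi$ are characterized by universal properties that localize along étale restriction. Once this compatibility is in place, everything else reduces to statements about affine spectral schemes already available in \cite{DAGVII}.
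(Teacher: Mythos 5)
Your argument is correct in outline but takes a genuinely different route from the paper. The paper disposes of the lemma in two lines by invoking the recognition criterion of \cite[Theorem 8.42]{DAGVII}: a spectrally ringed $\infty$--topos is a connective spectral Deligne--Mumford stack provided its $0$--truncation is an ordinary Deligne--Mumford stack and its homotopy sheaves are quasi-coherent. Both conditions are immediate here: since $n\geq 1$, the fiber sequence $\Sh{M}\to\Oof{X}^{\eta}\to\Oof{X}$ gives $\pi_0\Oof{X}^{\eta}\simeq\pi_0\Oof{X}$, so the $0$--truncations of $\dS{X}'$ and $\dS{X}$ agree, and the higher homotopy sheaves of $\Oof{X}^{\eta}$ are quasi-coherent because those of $\Oof{X}$ and $\Sh{M}$ are. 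You instead reduce to an affine étale cover and identify the restricted extension with $\Spec(A_\alpha^{\eta_\alpha})$; this works and has the merit of exhibiting explicit affine local models, but it forces you to establish the localization compatibility you yourself flag as the main obstacle, which the paper's route avoids entirely. One caveat on that step: your stated reason---that $\Phi$ ``is a right adjoint and therefore commutes with restriction to the étale site''---is not an argument, since right adjoints preserve limits and étale restriction is not computed as one. The correct justification is that $\Oof{X}^{\eta}$ is the pullback of the diagram $\Oof{X}\to\Oof{X}\oplus\Sh{M}[1]\leftarrow\Oof{X}$ (the two maps being the derivation $\eta$ and the trivial derivation), a finite limit which left-exact étale restriction preserves, combined with the étale-locality of the cotangent complex ($L_{A'}\simeq A'\otimes_A L_{A}$ for $A\to A'$ étale) and the topological invariance of the étale site, which is what lets you identify the restriction over $\Spec A_\alpha$ with the structure sheaf of $\Spec(A_\alpha^{\eta_\alpha})$. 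With those points made precise your proof goes through, at the cost of reproving locally what the recognition theorem gives globally.
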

\begin{proof}
  This is immediate by verify the conditions of \cite[Theorem  
  8.42]{DAGVII}. To verify the first condition, note that by the 
  assumption $n \geq 1$ the 0--truncations of $\dS{X}'$ and $\dS{X}$ are 
  equivalent.
\end{proof}

\begin{rem}
  Note that due to the connectivity assumption on $\Sh{M}$ in the above 
  lemma we have never changed the underlying $\infty$--topos, but only 
  have altered the structure sheaf.
\end{rem}

As in the algebraic case, we have to assume the existence of a global 
system of lifts of the obstruction theory.

\begin{defn}
  Let $\dS{X} = (\Sh{X},\Sh{O}_{\Sh{X}})$ be a spectral connective 
  Deligne--Mumford stack equipped with an $n$--connective quasi-coherent 
  obstruction theory $\phi \colon \Sh{E} \to L_{\Sh{O}_{\Sh{X}}}$, and 
  let $f \colon \dS{X} \to \dS{X}^{\eta}$ be the corresonding 
  square-zero extension. The obstruction \emph{lifts globally to 
    $\dS{X}^{\eta}$} if there exists an obstruction theory
  \[
    \phi^{\eta} \colon \Sh{E}^{\eta} \to L_{\Sh{O}_{\Sh{X}^{\eta}}}
  \]
  such that
  \[
    \xymatrix{
      f^* \Sh{E}^{\eta} \ar[r] \ar[d]_{\simeq} & 
      f^*L_{\Sh{O}_{\Sh{X}^{\eta}}} \ar[d]^{\simeq} \\
      \Sh{E} \ar[r]^{\phi'}&f^*L_{\Sh{O}_{\Sh{X}^{\eta}}}
    }
  \]
  commutes and the left vertical map is an equivalence. Here $\phi'$ is 
  induced by the fundamental diagram of the square-zero extension 
  $\dS{X}^{\eta} \to \dS{X}$ as in \eqref{eq:FundDiag}.
\end{defn}

\begin{defn}
  Let $\dS{X}_0 = (\Sh{X},\Sh{O}_{\Sh{X}_0})$ be a spectral connective 
  Deligne--Mumford stack equipped with an $n$--connective quasi-coherent 
  obstruction theory $\phi_0 \colon \Sh{E}_0 \to L_{\Sh{O}_{\Sh{X}_0}}$, 
  and let $\dS{X}_0 \to \dS{X}_1$ be the corresonding square-zero 
  extension.  A {global inductive system of lifts} of the obstruction 
  theory exists if for all $m \geq 0$ there exists a lift $\phi_{m+1} 
  \colon \Sh{E}_{m+1} \to L_{\Sh{O}_{\Sh{X}_{m+1}}}$ of the obstruction 
  theory $\phi_m$. Here $\Sh{X}_{m} \to \Sh{X}_{m+1}$ is the square-zero 
  extension defined by the obstruction theory $\phi_{m}$.
\end{defn}

Using the algebraic reconstruction theorem proven above, we can now 
prove a geometric version.

\begin{thm}
  \label{thm:geomresult}
  Let $\dS{X}_0 = (\Sh{X},\Sh{O}_{\Sh{X}_0})$ be a spectral connective 
  Deligne--Mumford stack equipped with an $n$--connective quasi-coherent 
  obstruction theory $\phi \colon \Sh{E} \to L_{\Sh{O}_{\Sh{X}_0}}$ with 
  $n \geq 1$.  Let $\Sh{K}=\cofib(\phi)$.
  Then a pair
  \[
    \left( i \colon \dS{X}_0 \to \dS{X},\widetilde{\delta} \colon \Sh{K} 
      \to L_{\Sh{O}_{\Sh{X}_0}/\Oof{X}} \right)
  \]
  inducing the obstruction theory exists if and only if a global system 
  of lifts of the obstruction theory exists.
\end{thm}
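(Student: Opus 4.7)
The plan is to reduce the statement to the algebraic reconstruction theorem \fullref{thm:algresult} by working inside the stable symmetric monoidal $\infty$--category $\Cat{C} := \Shv_{\Sp}(\Sh{X})$ of sheaves of spectra on the underlying $\infty$--topos $\Sh{X}$. This category is presentable, its tensor product preserves small colimits in each variable, and its standard $t$--structure has $\Cat{C}_{\geq 0}$ containing the unit and closed under tensor products, so \fullref{ass:t} is satisfied. Via the identification \eqref{eq:calg-object}, the structure sheaf $\Sh{O}_{\Sh{X}_0}$ becomes a connective commutative algebra object in $\Cat{C}$, and the quasi-coherent obstruction theory $\phi\colon \Sh{E}\to L_{\Sh{O}_{\Sh{X}_0}}$ becomes an $n$--connective obstruction theory in the abstract sense of \fullref{defn:pot}.

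For the forward direction, assume a global inductive system of lifts exists. Applying the construction in \fullref{thm:algresult} sheaf-theoretically, we obtain a tower of square-zero extensions of sheaves of $\IE_\infty$--rings
\[
  \Sh{O}_{\Sh{X}_0} \longleftarrow \Sh{O}_{\Sh{X}_1} \longleftarrow \Sh{O}_{\Sh{X}_2} \longleftarrow \cdots
\]
where the $m$--th map is $(2^{m-1}(n+1)-1)$--connective by \fullref{lem:KConnect}. At each step \fullref{lem:smallext} ensures that the pair $(\Sh{X},\Sh{O}_{\Sh{X}_m})$ is a connective spectral Deligne--Mumford stack, so in particular every $\Sh{O}_{\Sh{X}_m}$ is strictly Henselian in the sense needed. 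Define $\Oof{X} := \lim\{\Sh{O}_{\Sh{X}_m}\}$ and $\dS{X} := (\Sh{X},\Oof{X})$. Because the tower is Postnikov-convergent and $\trunc{k}{\Oof{X}} \simeq \trunc{k}{\Sh{O}_{\Sh{X}_k}}$ for each $k$, the resulting $\dS{X}$ is again a connective spectral Deligne--Mumford stack with the same underlying topos as $\dS{X}_0$, and its $n$--truncation agrees with $\trunc{n}{\dS{X}_0}$. By \fullref{lem:cotoftower} we have $L_{\Oof{X}} \simeq \lim\{L_{\Sh{O}_{\Sh{X}_m}}\}$, and the compatible system of $\delta_m$ maps produced in the proof of \fullref{thm:algresult} assembles to the required equivalence $\widetilde{\delta}\colon \Sh{K}\to L_{\Sh{O}_{\Sh{X}_0}/\Oof{X}}$, with the diagram in \fullref{defn:induce} commuting because it does so at every finite stage.

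For the converse, suppose $(i\colon \dS{X}_0 \to \dS{X},\widetilde{\delta})$ induces the obstruction theory. Then $i$ identifies $\Oof{X}$ with a sheaf of connective $\IE_\infty$--rings whose $n$--truncation is $\Sh{O}_{\Sh{X}_0}$, and the very same obstruction-vanishing argument used in the converse part of \fullref{thm:algresult} applied sheaf-theoretically shows that the universal map $\Oof{X} \to \Sh{O}_{\Sh{X}_0}$ factors through each inductively defined square-zero extension $\Sh{O}_{\Sh{X}_m}$, producing the required lifts $\phi_m^{\eta}\colon \Sh{E}_m \to L_{\Sh{O}_{\Sh{X}_m}}$ via pullback $\Oof{X}\otimes_{\Sh{O}_{\Sh{X}_m}} L_{\Sh{O}_{\Sh{X}_m}}$.

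The main obstacle is not the algebraic bookkeeping, which is already packaged in \fullref{thm:algresult} and is formally sheaf-theoretic, but rather the geometric point that the constructions preserve the class of spectral Deligne--Mumford stacks. Concretely, one must (i) verify that the square-zero extensions produced at each step satisfy the hypotheses of \fullref{lem:smallext}, which is straightforward since $n\geq 1$ guarantees that $0$--truncations are unchanged; and (ii) show that the inverse limit $\Oof{X} = \lim\{\Sh{O}_{\Sh{X}_m}\}$ is again the structure sheaf of a spectral Deligne--Mumford stack. The latter reduces to checking strict Henselian-ness of the stalks, which is preserved under limits of towers whose transition maps have increasingly connective fibers because such limits agree with their own Postnikov towers and each finite stage is already a spectral Deligne--Mumford stack.
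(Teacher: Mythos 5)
Your proposal follows the paper's own route: identify $\Sh{O}_{\Sh{X}_0}$ with a commutative algebra object of $\Shv_{\Sp}(\Sh{X})$ via \eqref{eq:calg-object}, apply \fullref{thm:algresult} to build the tower of square-zero extensions, invoke \fullref{lem:smallext} to keep each stage a connective spectral Deligne--Mumford stack, and handle the converse exactly as in the algebraic case. Your additional care in checking that the inverse limit of the tower is again a spectral Deligne--Mumford stack (via stabilization of the truncations along the increasingly connective transition maps) spells out a point the paper treats tersely, but the argument is essentially the same.
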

\begin{proof}
  First assume that a global system of lifts of the obstruction theory 
  exists. Let $\Cat{C}=\Shv_{\Sp}(\Sh{X})$, and using 
  \eqref{eq:calg-object} identify $\Sh{O}_{\Sh{X}_0}$ with an object of 
  $\CAlg(\Cat{C})$.  Applying \fullref{thm:algresult}, we obtain an 
  morphism $\Oof{X} \to \Sh{O}_{\Sh{X}_0}$ in $\CAlg(\Cat{C})$ and an 
  equivalence  $\widetilde{\delta} \colon \Sh{K} \to 
  L_{\Sh{O}_{\Sh{X}_0}/\Oof{X}}$
 in $\Mod_{ \Sh{O}_{\Sh{X}_0}}$. As every step of the construction given 
 in the proof of \fullref{thm:algresult} is a square-zero extension, the 
 pair $\iT{X}$ is indeed a spectral connective Deligne--Mumford stack by 
 \fullref{lem:smallext}. In particular, $L_{\Sh{O}_{\Sh{X}_0}/\Oof{X}}$ 
 is quasi-coherent. As $\QCoh(\dS{X}_0)$ is a full subcategory of 
 $\Mod_{ \Sh{O}_{\Sh{X}_0}}$, the claim follows.

 The proof is the converse is analogous to the algebraic case.
\end{proof}

We now proceed to prove a relative version of \fullref{thm:geomresult} 
over $\Spec (k)$. So let $p \colon \dS{X}_0 = (\Sh{X},\Sh{O}_{\Sh{X}_0}) 
\to \Spec(k)$ be a morphism of spectral connective Deligne--Mumford 
stacks.  Pulling back the structure sheaf of $\Spec (k)$, we obtain a 
morphism $p^* \Sh{O}_{\Spec (k)} \to \Sh{O}_{\Sh{X}_0}$ of connective 
commutative algebra objects in $\Shv_{\Sp}(\Sh{X})$. In particular we 
can view $\Sh{O}_{\Sh{X}_0}$ as a connective object of $\CAlg (\Mod_{p^* 
  \Sh{O}_{\Spec(k)}})$.

\begin{prop}
  \label{prop:geom_overk}
  Let $k$ be a connective $\IE_{\infty}$--ring, and let 
  $\dS{X}_0=(\Sh{X},\Sh{O}_{\Sh{X}_0})$ be a spectral connective 
  Deligne--Mumford stack over $\Spec (k)$ equipped with an $n$--connective 
  quasi-coherent obstruction theory $\phi \colon \Sh{E} \to 
  L_{\Sh{O}_{\Sh{X}_0}/k}$ with $n \geq 1$.  Then a pair
  \[
    \left( i \colon \dS{X}_0 \to \dS{X},\widetilde{\delta} \colon \Sh{K} 
      \to L_{\Sh{O}_{\Sh{X}_0}/\Oof{X}} \right)
  \]
  with $\dS{X}$ is a spectral connective Deligne--Mumford stack over 
  $\Spec (k)$ inducing the obstruction theory exists if and only if a 
  global inductive system of lifts of the obstruction theory 
  exists.
\end{prop}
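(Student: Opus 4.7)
The plan is to reduce the relative statement to \fullref{thm:algresult} by running the inductive construction inside the relative $\infty$-category of sheaves of $p^{\ast}\Sh{O}_{\Spec(k)}$-module spectra on $\Sh{X}$, and then to verify, both stepwise and in the limit, that the resulting sheaf of $\IE_\infty$-algebras underlies a spectral connective Deligne--Mumford stack equipped with a morphism to $\Spec(k)$.

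Concretely, I would take $\Cat{C} = \Mod_{p^{\ast}\Sh{O}_{\Spec(k)}}(\Shv_{\Sp}(\Sh{X}))$, which is a presentable stable symmetric monoidal $\infty$-category equipped with a $t$-structure satisfying \fullref{ass:t}. Under the analogue of the identification (\ref{eq:calg-object}), objects of $\CAlg(\Cat{C})$ are sheaves of connective $\IE_\infty$-algebras over $k$ on $\Sh{X}$; in particular $\Sh{O}_{\Sh{X}_0}$ lives naturally in $\CAlg(\Cat{C})$, and its cotangent complex computed there is precisely $L_{\Sh{O}_{\Sh{X}_0}/k}$. Consequently $\phi$ becomes an $n$-connective obstruction theory in $\Cat{C}$, and a global inductive system of lifts allows me to apply \fullref{thm:algresult} to produce a tower of increasingly connective square-zero extensions $\Sh{O}_{\Sh{X}_{m+1}} \to \Sh{O}_{\Sh{X}_m}$ in $\CAlg(\Cat{C})$ with inverse limit $\Oof{X}$, together with the equivalence $\widetilde{\delta} \colon \Sh{K} \to L_{\Sh{O}_{\Sh{X}_0}/\Oof{X}}$.

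The geometric verification then proceeds as follows. By \fullref{lem:smallext} each stage $(\Sh{X}, \Sh{O}_{\Sh{X}_m})$ is a connective spectral Deligne--Mumford stack, and because the entire tower lives in $\CAlg(\Cat{C})$ every stage carries a canonical morphism to $\Spec(k)$ compatible with $p$. The main obstacle is to confirm that the limit $\iT{X}$ is again a spectral Deligne--Mumford stack. For this I would use that the connectivity estimates in the proof of \fullref{thm:algresult} force the Postnikov tower of $\Oof{X}$ to agree in each fixed degree with that of $\Sh{O}_{\Sh{X}_m}$ for $m$ sufficiently large; in particular $\trunc{0}{\iT{X}} \simeq \trunc{0}{\dS{X}_0}$ and every $\pi_j \Oof{X}$ is a quasi-coherent module over $\pi_0 \Oof{X}$, so the local structure conditions of \cite[Theorem 8.42]{DAGVII} can be checked on a geometric truncation where they are already known. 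The morphism $\dS{X} \to \Spec(k)$ arises from passing to the limit of the compatible unit maps $p^{\ast}\Sh{O}_{\Spec(k)} \to \Sh{O}_{\Sh{X}_m}$, and quasi-coherence of $\widetilde{\delta}$ follows as in \fullref{thm:geomresult}. The converse implication proceeds exactly as in the absolute case: given such a pair over $\Spec(k)$, the obstruction to lifting $\dS{X}_0 \to \dS{X}_m$ through the next square-zero extension vanishes tautologically, yielding the desired inductive system of lifts.
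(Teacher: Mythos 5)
Your proposal follows essentially the same route as the paper: apply \fullref{thm:algresult} in the category $\CAlg(\Mod_{p^{\ast}\Sh{O}_{\Spec(k)}})$, where the cotangent complex is the relative one $L_{\Sh{O}_{\Sh{X}_0}/k}$, and then verify geometricity stepwise via \fullref{lem:smallext} and conclude as in \fullref{thm:geomresult}, with the converse handled as in the algebraic case. Your added remarks on Postnikov stabilization of the limit simply make explicit a point the paper leaves to the analogy with \fullref{thm:geomresult}, so the argument is correct and matches the paper's proof.
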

\begin{proof}
  Assuming that a global system of lifts of the obstruction theory 
  exists, we apply \fullref{thm:algresult} to the category $\CAlg 
  (\Mod_{p^* \Sh{O}_{\Spec(k)}})$ to obtain a morphism of commutative 
  algebra objects $\Oof{X} \to \Sh{O}_{\Sh{X}_0}$ in $\CAlg (\Mod_{p^* 
    \Sh{O}_{\Spec(k)}})$ and an equivalence $\widetilde{\delta} \colon 
  \Sh{K} \to L_{\Sh{O}_{\Sh{X}_0}/\Oof{X}}$ of 
  $\Sh{O}_{\Sh{X}_0}$--modules.  Define $\dS{X}=\iT{X}$. The remainder 
  of the proof is analogous to the proof of \fullref{thm:geomresult}.  
\end{proof}

As in \fullref{prop:finpres} we want to ensure certain finiteness 
properties if we start with an ordinary Deligne--Mumford stack locally 
of finite presentation over $\pi_0 k$ equipped with an $n$--connective 
$m$--perfect obstruction theory.

\begin{lem}
  \label{lem:lfp}
  Let $\dS{X}_0$ be an ordinary Deligne--Mumford stack locally of finite 
  presentation over $\Spec (\pi_0 k)$ equipped with an $n$--connective 
  $m$--perfect obstruction theory $\phi \colon \Sh{E} \to 
  L_{\Sh{O}_{\Sh{X}_0}/k}$ with $n \geq 1$.  Then in any pair $ ( i 
  \colon \dS{X}_0 \to \dS{X},\widetilde{\delta} \colon \Sh{K}      \to 
  L_{\Sh{O}_{\Sh{X}_0}/\Oof{X}} )$ inducing the obstruction theory, the 
  connective spectral Deligne--Mumford stack $\dS{X}=\iT{X}$ is locally 
  of finite presentation over $\Spec (k)$ and the cotangent complex 
  $L_{\Oof{X}/k}$ is of Tor--amplitude $\leq m$.
\end{lem}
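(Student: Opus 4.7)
The plan is to reduce the statement to its affine analogue, namely \fullref{prop:finpres}, by working locally on an étale cover. Since $\dS{X}_0$ is locally of finite presentation over $\Spec(\pi_0 k)$, we may choose an étale cover by affines $\Spec(A_\alpha) \to \dS{X}_0$ with each $A_\alpha$ a finitely presented discrete commutative $\pi_0 k$--algebra. Because $i \colon \dS{X}_0 \to \dS{X}$ is an equivalence on the underlying $\infty$--topos, the étale site of $\dS{X}$ agrees with that of $\dS{X}_0$, so this cover lifts to an étale cover of $\dS{X}$ by affines $\Spec(B_\alpha) \to \dS{X}$ with $\pi_0 B_\alpha \simeq A_\alpha$ (and $B_\alpha$ connective over $k$).

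Next I would observe that since $\Sh{E}$ is quasi-coherent, perfect, and $m$--truncated, its restriction $E_\alpha$ to $\Spec(A_\alpha)$ is again a perfect $m$--truncated $A_\alpha$--module, and the restricted morphism $\phi_\alpha \colon E_\alpha \to L_{A_\alpha/k}$ is still an $n$--connective $m$--perfect obstruction theory. Likewise, the equivalence $\widetilde{\delta}$ restricts to an equivalence on each affine patch, so the pair $(B_\alpha \to A_\alpha, \widetilde{\delta}_\alpha)$ induces the restricted obstruction theory in the sense of \fullref{defn:induce}. Applying \fullref{prop:finpres} to each patch yields that $B_\alpha$ is locally of finite presentation as an object of $\CAlg_k$ and that $L_{B_\alpha/k}$ has Tor--amplitude $\leq m$.

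Finally, by definition local finite presentation of $\dS{X}$ over $\Spec(k)$ is exactly the existence of such an étale affine cover by compact objects of $\CAlg_k$, so the first conclusion follows. For the cotangent complex, Tor--amplitude is an étale-local property of quasi-coherent modules, and $L_{\Oof{X}/k}$ restricts to $L_{B_\alpha/k}$ on each patch; since each local pullback has Tor--amplitude $\leq m$, so does $L_{\Oof{X}/k}$.

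The main obstacle I anticipate is the routine but slightly delicate verification that the étale local data—the cover by $\Spec(B_\alpha)$, the local perfectness of $E_\alpha$, and the compatibility of $\widetilde{\delta}$ with restriction—truly assembles a pair inducing the obstruction theory \emph{locally} in the sense required by \fullref{prop:finpres}. Once this is set up, both finite presentation and the Tor--amplitude bound are immediate consequences of the affine result and the fact that both properties are étale-local.
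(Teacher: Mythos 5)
Your proof is correct and follows exactly the paper's route: the paper's own argument is the one-line observation that both assertions (local finite presentation over $\Spec(k)$ and the Tor--amplitude bound on $L_{\Oof{X}/k}$) are \'etale-local, so they follow from \fullref{prop:finpres} applied on an affine cover. Your write-up simply makes explicit the cover $\Spec(B_\alpha) \to \dS{X}$ lifting $\Spec(A_\alpha) \to \dS{X}_0$ and the compatibility of the restricted data, which is the same reduction the paper leaves implicit.
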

\begin{proof}
  As both assertions are local this follows from \fullref{prop:finpres}.
\end{proof}

In case $n=m$ and the Aityah class formalism is available we obtain the 
following geometric version of \fullref{cor:Atiyah}.

\begin{cor}
  \label{cor:qsmooth}
  Let $k$ be a discrete commutative ring $k$ containing the rationals 
  $\IQ$, so that the Atiyah class formalism is available. Then if 
  $\dS{X}_0$ is locally of finite presentation over $\Spec (k)$ and 
  equipped with an $n$--connective $n$--perfect obstruction theory for 
  $n \geq 1$,  a pair $ ( i \colon \dS{X}_0 \to  
  \dS{X},\widetilde{\delta} \colon \Sh{K} \to  
  L_{\Sh{O}_{\Sh{X}_0}/\Oof{X}} )$ inducing the obstruction theory 
  exists if and only if the inductively defined classes
  \[
    \Ext^0_{\Sh{O}_{\Sh{X}_n}}(\Sh{E}^{n},\Sh{K}_{n-1} \otimes 
    _{\Sh{O}_{\Sh{X}_n}} L_{\Sh{O}_{\Sh{X}_n}} )
  \]
  vanish.
  \end{cor}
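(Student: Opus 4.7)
The plan is to combine \fullref{prop:geom_overk} with the Atiyah-class machinery already developed in the algebraic setting (the two lemmas preceding \fullref{cor:Atiyah}), applied at each stage of the inductive tower of square-zero extensions.

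By \fullref{prop:geom_overk}, a pair $(i,\widetilde{\delta})$ inducing the obstruction theory exists if and only if a global inductive system of lifts of the obstruction theory exists. So the task reduces to identifying, at each level $n$, the precise obstruction to extending the lift one step further, and matching it with the displayed $\Ext^0$-class. I would therefore fix $n \geq 1$, assume the obstruction theory has been lifted to a perfect obstruction theory $\phi_n \colon \Sh{E}_n \to L_{\Sh{O}_{\Sh{X}_n}}$, and analyse the two-stage obstruction to passing to $\dS{X}_{n+1}$, exactly as in the proof of \fullref{cor:Atiyah}:
\begin{enumerate}[(i)]
\item lifting the perfect complex $\Sh{E}_n$ to a perfect complex on $\dS{X}_{n+1}$, and
\item lifting the morphism $\phi'_n$ from the fundamental diagram to a morphism into $L_{\Sh{O}_{\Sh{X}_{n+1}}}$.
\end{enumerate}
The first obstruction vanishes automatically by the $t$-structure argument of the first lemma preceding \fullref{cor:Atiyah}: the source $(\End(\Sh{E}_n)[1])^\vee$ is sufficiently truncated because $\Sh{E}_n$ is $n$-perfect, while the target $\Sh{K}_{n-1}$ is sufficiently connective by \fullref{lem:KConnect}. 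The second obstruction is, by the second lemma preceding \fullref{cor:Atiyah}, exactly the class stated in the corollary.

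The principal obstacle is verifying that the two algebraic lemmas genuinely globalize to the spectral Deligne--Mumford stack $\dS{X}_n$, i.e.\ that the étale-local obstruction classes assemble to a single global class in $\Ext^0_{\Sh{O}_{\Sh{X}_n}}(\Sh{E}^n, \Sh{K}_{n-1} \otimes_{\Sh{O}_{\Sh{X}_n}} L_{\Sh{O}_{\Sh{X}_n}})$. This is precisely where the hypothesis $k \supseteq \IQ$ and the existence of a classifying stack $\Perf$ for perfect complexes (\cite{tvaq,pridham}, invoked in the paragraph introducing the algebraic corollary) enter: the global classifying map $\dS{X}_n \to \Perf$ produces a globally defined Atiyah class for $\Sh{E}_n$, whose pairing with the derivation $L_{\Sh{O}_{\Sh{X}_n}} \to \Sh{K}_{n-1}$ is the single global obstruction class of the statement. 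With this globalization in hand, induction on $n$ together with \fullref{prop:geom_overk} yields the forward implication, and the converse is immediate: simultaneous vanishing of all the inductively defined classes produces a global inductive system of lifts, to which \fullref{prop:geom_overk} applies to give the required pair.
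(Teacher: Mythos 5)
Your proposal matches the paper's intended argument: the paper states this corollary without a separate proof, precisely because it is meant to follow by combining \fullref{prop:geom_overk} (existence of the inducing pair $\Leftrightarrow$ a global inductive system of lifts) with the two Atiyah-class lemmas behind \fullref{cor:Atiyah}, applied stage by stage, with $k \supseteq \IQ$ providing the global classifying map to the stack of perfect complexes. Your treatment of the two-step obstruction (lifting $\Sh{E}_n$ via the $t$--structure and connectivity of $\Sh{K}_{n-1}$ from \fullref{lem:KConnect}, then lifting $\phi'_n$ with the $\Ext^0$--class) is exactly the paper's route, so the proposal is correct and essentially identical in approach.
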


%\begin{rem}
%  Assume that we are working over a discrete commutative ring $k$ 
%  containing the rationals $\IQ$.  Using the equivalence of spectral 
%  algebraic geometry and derived algebraic geometry over $k$ 
%  \cite[Corollary  9.28]{DAGVII} the results of 
%  \fullref{prop:geom_overk} and \fullref{cor:qsmooth} hold in derived 
%  algebraic geometry over $k$.
%\end{rem}

%%%%%%%%%%%%%%%
\bibliographystyle{gtart}
\bibliography{bibliography}
\end{document}